\documentclass[12pt]{amsart}
\usepackage[margin=1.1in]{geometry}
\linespread{1.2}
\usepackage{pb-diagram}



\usepackage{amssymb,amsthm,amsmath,latexsym,stmaryrd}
\usepackage{graphicx}
\usepackage[all]{xy}





\newcommand{\complex}{{\mathbb C}}

\newcommand{\ball}{{\mathbb B}}

\theoremstyle{plain}
        \newtheorem{theorem}{Theorem}[section]
        \newtheorem{lemma}[theorem]{Lemma}
        \newtheorem{remark}[theorem]{Remark}
        \newtheorem{proposition}[theorem]{Proposition}
        \newtheorem{corollary}[theorem]{Corollary}

\theoremstyle{definition}
        \newtheorem{definition}[theorem]{Definition}

\title[Index Theorem and Resolution]{A New Index Theorem for Monomial Ideals by Resolutions}
\author{Ronald G. Douglas}
\address{Department of Mathematics, Texas A$\&$M University, College Station, TX, 77843}
\email{rdouglas@math.tamu.edu}

\author{Mohammad Jabbari}
\address{Department of Mathematics, Washington University, St. Louis, Missouri, USA, 63130}
\email{jabbari@wustl.edu}

\author{Xiang Tang}
\address{Department of Mathematics, Washington University, St. Louis, Missouri, USA, 63130}
\email{xtang@math.wustl.edu}

\author{Guoliang Yu}
\address{Department of Mathematics, Texas  A$\&$M University, College Station, TX, 77843, Shanghai Center for Mathematical Sciences, Fudan University, Shanghai, China, 200433}
\email{guoliangyu@math.tamu.edu}

\begin{document}
\begin{abstract}
We prove an index theorem for the quotient module of a monomial ideal.  We obtain this result by resolving the monomial ideal by a sequence of Bergman space like essentially normal Hilbert modules. 
\end{abstract}
\maketitle
\section{Introduction}
Let $\mathbb{B}^m$ be the unit ball in the complex space $\mathbb{C}^m$, and $L^2_a(\mathbb{B}^m)$ be the Bergman space of square integrable holomorphic functions on $\mathbb{B}^m$. Denote $A$ to be the algebra $\mathbb{C}[z_1, \cdots, z_m]$ of polynomials of $m$ generators.  The algebra $A$ plays two roles in our study. One is that $A$ is a dense subspace of the Hilbert space $L^2_a(\mathbb{B}^m)$, the other is that $A$ acts on $L^2_a(\mathbb{B}^m)$ by Toeplitz operators. 

In this article we are interested in an ideal $I$ of $A$ generated by monomials. Let $\overline{I}$ be the closure of $I$ in $L^2_a(\mathbb{B}^m)$, and $Q_I$ be the quotient Hilbert space $L^2_a(\ball^m)/\overline{I}$. The first author proved in \cite[Theorem 2.1]{do:index} that the the Toeplitz operators $T_{z_i}$ on $\overline{I}$, $i=1,\cdots, m$, and the quotient $Q_I$ are essentially normal\footnote{Arveson \cite[Corollary 2.2]{ar:monomial} proved the similar result on the Drury-Arveson space.}, i.e. the following commutators are compact,
\[
[T_{z_i}|_{\overline{I}}, \big(T_{z_j}|_{\overline{I}}\big)^*]\in \mathcal{K}(\overline{I}),\ \text{and}\ [T_{z_i}|_{Q_I}, \big(T_{z_j}|_{Q_I}\big)^*]\in \mathcal{K}(Q_I),\ i,j=1,\cdots, m. 
\]
Let $\mathfrak{T}(Q_I)$ be the unital $C^*$-algebra generated by the Toeplitz operators $T_{z_i}|_{Q_I}$, $i=1, \cdots, m$. The above essentially normal property of the Toeplitz operators gives the following extension sequence
\[
0\longrightarrow \mathcal{K}\longrightarrow \mathfrak{T}(Q_I)\longrightarrow C(\sigma^e_I)\longrightarrow 0,
\]
where $\sigma^e_I$ is the essential spectrum space of the algebra $\mathfrak{T}(Q_I)$ and $\mathcal{K}$ is the algebra of compact operators. The index problem we want to  answer in this article is to provide a good description of the above $K$-homology class. 

The main difficulty in answering the above question is that the ideal $I$ in general fails to be radical. This makes the geometric ideas introduced in \cite{do-ta-yu:grr} and \cite{ee:geometric} impossible to apply directly. The seed of the main idea in this article is the following observation discussed in \cite[Section 5.2]{do-ta-yu:grr}. For $m=2$, consider the ideal $I=\langle z_1^2\rangle\triangleleft A=\complex[z_1, z_2]$. The quotient $Q_I$ can be written as the sum of two space
\[
L^2_{a,1}(\mathbb{D})\oplus L^2_{a,2}(\mathbb{D}),
\]
where $\mathbb{D}$ is the unit disk inside the complex plane $\complex$, and $L^2_{a,1}(-)$ (and $L^2_{a,2}(-)$) is the weighted Bergman space with respect to the weight defined by the defining function $1-|z|^2$ (and $(1-|z|^2)^2$).  Define the restriction map  $R_I: L^2_a(\ball^2)\to L^2_{a,1}(\mathbb{D})\oplus L^2_{a, 2}(\mathbb{D})$ by 
\[
R_I(f):=(f|_{z_1=0}, \frac{\partial f}{\partial z_1}|_{z_1=0}).
\]
Then it is not hard to introduce a Hilbert $A=\complex[z_1, z_2]$-module structure on $L^2_{a,1}(\mathbb{D})\oplus L^2_{a,2}(\mathbb{D})$ so that the following exact sequence of Hilbert modules holds, 
\[
0\longrightarrow \overline{I}\longrightarrow L^2_a(\ball^2)\longrightarrow L^2_{a,1}(\mathbb{D})\oplus L^2_{a, 2}(\mathbb{D})\longrightarrow 0.
\]
It is well known that the Toeplitz operators on the (weighted) Bergman space  $L^2_{a,1}(\mathbb{D})$ and $ L^2_{a, 2}(\mathbb{D})$ are essentially normal. In \cite[Section 5.2]{do-ta-yu:grr}, we observe that one can conclude the essential normal property of the Hilbert module $\overline{I}$ and $Q_I$ from the above exact sequence. Furthermore, it is not hard to prove that the quotient Hilbert module $Q_I$ is isomorphic to the module $ L^2_{a,1}(\mathbb{D})\oplus L^2_{a, 2}(\mathbb{D})$. And we can identify the extension class $[\mathfrak{T}(Q_I)]$ associated to $Q_I$ with the one associated to $ L^2_{a,1}(\mathbb{D})\oplus L^2_{a, 2}(\mathbb{D})$. 

In this article, we extend the above example to an arbitrary ideal $I$ of $L^2_a(\ball^m)$ generated by monomials. By considering an ideal generated by two monomials, we realize that it is more natural to work with long exact sequences of Hilbert modules, instead of short ones. The following is our main theorem. 

\begin{theorem}\label{thm:main-intro}
Let $I$ be an ideal  of $\complex[z_1,\cdots, z_m]$ generated by monomials, and $\overline{I}$ be its closure in the Bergman space $L^2_a(\ball^m)$. There are Bergman space like Hilbert $A$-modules $\mathcal{A}_0=L^2_a(\ball^m), \mathcal{A}_1, \cdots, \mathcal{A}_k$ together with bounded $A$-module morphisms $\Psi_i: \mathcal{A}_i\to \mathcal{A}_{i+1}$, $i=0, \cdots, k-1$ such that the following exact sequence of Hilbert modules holds
\[
0\longrightarrow \overline{I}\hookrightarrow L^2_a(\ball^m)\stackrel{\Psi_0}{\longrightarrow} \mathcal{A}_1\stackrel{\Psi_1}{\longrightarrow}\cdots \stackrel{\Psi_{k-1}}{\longrightarrow }\mathcal{A}_k\longrightarrow 0.
\]
\end{theorem}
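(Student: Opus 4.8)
\emph{Overview of the plan.} The idea is to strip the analysis away first, build the desired long exact sequence purely algebraically over $A=\complex[z_1,\dots,z_m]$ out of the combinatorics of $I$, and then complete every term and every map to Hilbert modules while retaining enough uniformity to keep the sequence exact.

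\emph{Step 1 (algebraic resolution).} I would start from the irreducible decomposition $I=\mathfrak{q}_1\cap\dots\cap\mathfrak{q}_n$, where each $\mathfrak{q}_\ell=\langle z_i^{a_i^{(\ell)}}:i\in S_\ell\rangle$ is generated by pure powers of a subset of variables; for $\emptyset\ne S\subseteq\{1,\dots,n\}$ set $\mathfrak{q}_S=\sum_{\ell\in S}\mathfrak{q}_\ell$, again an irreducible monomial ideal. The relevant complex is the \v{C}ech--type complex of $A$--modules
\[
0\longrightarrow I\longrightarrow A\stackrel{\psi_0}{\longrightarrow}\bigoplus_{\ell}A/\mathfrak{q}_\ell\stackrel{\psi_1}{\longrightarrow}\bigoplus_{\ell<\ell'}A/\mathfrak{q}_{\{\ell,\ell'\}}\stackrel{\psi_2}{\longrightarrow}\cdots\longrightarrow A/\mathfrak{q}_{\{1,\dots,n\}}\longrightarrow 0,
\]
with differentials the alternating sums of the canonical surjections. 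Since every ideal in sight is monomial, this complex is the direct sum over $\alpha\in\N^m$ of its $z^\alpha$--strands (the pieces spanned by the image of the monomial $z^\alpha$ in each $A/\mathfrak{q}_S$), and the $z^\alpha$--strand is, by inspection, the augmented simplicial cochain complex of the full simplex on the vertex set $\{\ell:z^\alpha\notin\mathfrak{q}_\ell\}$. That simplex is contractible when the vertex set is nonempty, so the augmented strand is exact; it is empty precisely when $z^\alpha\in I$, in which case the strand is $0\to\complex\xrightarrow{\ \mathrm{id}\ }\complex\to0$ sitting in the $I$-- and $A$--slots. Hence the displayed complex is exact, and the length is $k=n$.

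\emph{Step 2 (Bergman completions of the terms).} Each term $A/\mathfrak{q}_S$, with $\mathfrak{q}_S=\langle z_i^{d_i}:i\in T_S\rangle$, splits as $\bigoplus_\gamma z^\gamma\,\complex[z_j:j\notin T_S]$ over the finite box $\{\gamma\in\N^{T_S}:\gamma_i<d_i\}$. For each $\gamma$ I would attach the weighted Bergman space $\calb_{S,\gamma}$ of holomorphic functions on the coordinate subball $\ball^m\cap\{z_i=0,\ i\notin T_S\}$ with norm $\|h\|^2_{\calb_{S,\gamma}}:=\|z^\gamma h\|^2_{L^2_a(\ball^m)}$; a Forelli--Rudin fibre integration identifies this with $L^2_a$ of that subball against the weight $(1-\|z\|^2)^{|T_S|+|\gamma|}$, so the terms really are Bergman space like. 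Put $\cala_j=\bigoplus_{|S|=j}\bigoplus_\gamma\calb_{S,\gamma}$ with $\cala_0=L^2_a(\ball^m)$, carrying the Hilbert $A$--module structure transported from $A/\mathfrak{q}_S$: the variables in $T_S$ act by the nilpotent shifts between the $\calb_{S,\gamma}$'s and the remaining variables by Toeplitz multiplication, all bounded by the standard estimate on ratios of Bergman norms of neighbouring monomials.

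\emph{Step 3 (the morphisms and exactness), and the main obstacle.} I would define $\Psi_j$ by the same alternating sums, completing the surjection $A/\mathfrak{q}_S\twoheadrightarrow A/\mathfrak{q}_{S'}$ (for $S\subseteq S'$) to the derivative--restriction map $f\mapsto\frac1{\gamma!}\partial_z^\gamma f\big|_{z_i=0,\,i\notin T_{S'}}$. The point that makes everything work --- and the place where the real work sits --- is that the weights in Step 2 were chosen exactly so that each such map carries monomials to monomials isometrically (or to $0$); consequently $\Psi_j$ is bounded, is an $A$--module morphism, restricts to $\psi_j$ on polynomials, and the whole Hilbert complex is again the orthogonal direct sum over $\alpha\in\N^m$ of its $z^\alpha$--strands, each strand now an \emph{isometric} copy of the corresponding finite exact complex from Step 1. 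There being only finitely many combinatorial types of strand, the reduced minimum moduli of the differentials are uniformly bounded below by a positive constant; hence each $\Psi_j$ has closed range equal to $\ker\Psi_{j+1}$, and $\ker\Psi_0=\overline{I}$, giving the asserted exact sequence. I expect the homological bookkeeping of Step 1 and the norm estimates of Step 2 to be routine, with the crux being this compatibility: pinning down the weights on the $\calb_{S,\gamma}$ so that the derivative--restriction maps are exactly isometric on monomials is the single normalization that simultaneously exhibits the terms as genuine weighted Bergman spaces and supplies the uniform lower bound on minimum moduli needed for exactness to survive completion.
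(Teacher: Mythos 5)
Your plan is correct and produces, up to relabeling, the very complex the paper builds: $C(I)\subset\mathbb{N}^m$ is written as a union of boxes (the paper via the arrays $\mathfrak{s}\in S(\alpha_1,\dots,\alpha_l)$, you via the irreducible decomposition of $I$ --- both yield the maximal boxes, and redundant ones are harmless), $\mathcal{A}_q$ is the direct sum of the Hilbert subspaces of $L^2_a(\ball^m)$ supported on $q$-fold intersections of boxes (each identified in Proposition~\ref{prop:R} with a finite sum of weighted Bergman spaces on a coordinate subball, your Forelli--Rudin step), and $\Psi_q$ is the alternating coefficient projection. The genuine difference is how exactness at $\mathcal{A}_q$ is proved. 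The paper (Proposition~\ref{prop:exactness}) runs an induction on the number of boxes, splitting off the last box and chasing a $2\times2$ triangular form for $\Psi_q$. You instead decompose the entire Hilbert complex into its orthogonal $z^\alpha$-strands, recognize each strand as the augmented cochain complex of the full simplex on $V_\alpha=\{\ell:z^\alpha\notin\mathfrak{q}_\ell\}$ (exact by contractibility, or trivially so when $V_\alpha=\emptyset$, which is precisely the case $z^\alpha\in I$), and then observe that in the normalized monomial basis the strand differentials have $\pm1$ entries independent of $\alpha$, so there are only finitely many strand isomorphism types and hence a uniform lower bound on reduced minimum moduli; that closed-range estimate is exactly what lets exactness of the graded polynomial complex survive $\ell^2$-completion. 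Your route is more conceptual --- it makes the \v{C}ech/Taylor structure of the box cover visible, simultaneously delivers boundedness and $\ker\Psi_0=\overline{I}$ from the same strand picture, and isolates the single normalization (weights making the jet-restriction maps partial isometries strand by strand) on which everything turns --- while the paper's induction is more elementary and self-contained, never needing to mention minimum moduli. You correctly identified that weight normalization as the crux.
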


As a corollary of Theorem \ref{thm:main-intro}, we obtain a new proof of essentially normal property of the ideal $\overline{I}$ and its quotient $Q_I$. Moreover, Theorem \ref{thm:main-intro} allows us to identify the extension class associated to $\mathfrak{T}(Q_I)$ geometrically. We compute it in the following theorem. 

\begin{theorem} \label{thm:k-hom-intro} (Theorem \ref{thm:k-hom})
Let $\mathfrak{T}(\mathcal{A}_i)$ be the unital $C^*$-algebra generated by Toeplitz operators on $\mathcal{A}_i$, and $\sigma^e_{i}$ be the associated essential spectrum space.  In $K_1(\sigma^e_1\cup \cdots \cup \sigma^e_k)$, the following equation holds, 
\[
[\mathfrak{T}(Q_I)]=[\mathfrak{T}(\mathcal{A}_1)]-[\mathfrak{T}(\mathcal{A}_2)]+\cdots +(-1)^{k-1} [\mathfrak{T}(\mathcal{A}_k)],
\]
\end{theorem}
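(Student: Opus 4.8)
The plan is to cut the long exact sequence of Theorem~\ref{thm:main-intro} into short exact sequences of essentially normal Hilbert modules and to add up the Toeplitz extension classes along them. For $1 \le i \le k-1$ set $Q_i := \ker(\Psi_i) \subseteq \mathcal{A}_i$, so that $Q_i = \operatorname{im}(\Psi_{i-1})$ by exactness, and set $Q_k := \mathcal{A}_k$, which equals $\operatorname{im}(\Psi_{k-1})$ by exactness at $\mathcal{A}_k$. Exactness at $\mathcal{A}_i$ for $1 \le i \le k-1$ then produces short exact sequences of bounded $A$-module maps
\[
0 \longrightarrow Q_i \longrightarrow \mathcal{A}_i \stackrel{\Psi_i}{\longrightarrow} Q_{i+1} \longrightarrow 0 .
\]
Here $Q_{i+1}$ is a closed submodule, so the open mapping theorem makes $\Psi_i$ induce a module similarity $\mathcal{A}_i/Q_i \xrightarrow{\ \sim\ } Q_{i+1}$ intertwining the Toeplitz tuples; similarly $\Psi_0$ induces $Q_I = L^2_a(\ball^m)/\overline{I} \xrightarrow{\ \sim\ } Q_1$, so $[\mathfrak{T}(Q_I)] = [\mathfrak{T}(Q_1)]$. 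Moreover every $Q_i$ is essentially normal: this is exactly what the construction behind Theorem~\ref{thm:main-intro} delivers, and is the new proof of essential normality of $Q_I$ promised in the introduction, so $\mathfrak{T}(Q_i)$ and its essential spectrum space $\sigma^e(\mathfrak{T}(Q_i))$ make sense.

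The computation rests on a lemma I would establish first: \emph{if $0 \to P \to M \to N \to 0$ is a short exact sequence of essentially normal Hilbert $A$-modules, then $\sigma^e(\mathfrak{T}(M)) = \sigma^e(\mathfrak{T}(P)) \cup \sigma^e(\mathfrak{T}(N))$ and $[\mathfrak{T}(M)] = [\mathfrak{T}(P)] + [\mathfrak{T}(N)]$ in $K_1\big(\sigma^e(\mathfrak{T}(M))\big)$.} To prove this, regard $P$ as the closed submodule it determines in $M$; relative to $M = P \oplus P^{\perp}$ each Toeplitz operator $T^M_{z_j}$ is block upper triangular, with diagonal corners the Toeplitz operator $T^P_{z_j}$ and the compression of $T^M_{z_j}$ to $P^{\perp}$, the latter being unitarily equivalent to the Toeplitz operator of $M/P$ and hence, through the induced module similarity $M/P \xrightarrow{\ \sim\ } N$, carrying the same extension data as $T^N_{z_j}$. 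Passing to the Calkin algebra, the images of the $T^M_{z_j}$ generate a commutative $C^*$-algebra that is block triangular compatibly with $M = P \oplus P^{\perp}$; the off-diagonal corner is annihilated in the relevant subquotients, and the splitting of the essential spectrum and the additivity of the extension class drop out, exactly as in the treatment of short exact sequences of Hilbert modules in \cite{do-ta-yu:grr}.

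Granting the lemma and applying it to the $k-1$ sequences $0 \to Q_i \to \mathcal{A}_i \to Q_{i+1} \to 0$, we get $\sigma^e(\mathfrak{T}(Q_i)) \subseteq \sigma^e_i \subseteq X := \sigma^e_1 \cup \cdots \cup \sigma^e_k$ for every $i$; pushing the resulting identities forward into $K_1(X)$ along the inclusions $\sigma^e_i \hookrightarrow X$ yields
\[
[\mathfrak{T}(\mathcal{A}_i)] = [\mathfrak{T}(Q_i)] + [\mathfrak{T}(Q_{i+1})], \qquad 1 \le i \le k-1 ,
\]
together with $[\mathfrak{T}(Q_1)] = [\mathfrak{T}(Q_I)]$ and $[\mathfrak{T}(Q_k)] = [\mathfrak{T}(\mathcal{A}_k)]$. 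In the alternating sum $\sum_{i=1}^{k-1}(-1)^{i-1}[\mathfrak{T}(\mathcal{A}_i)]$ the classes $[\mathfrak{T}(Q_2)], \dots, [\mathfrak{T}(Q_{k-1})]$ cancel in consecutive pairs, leaving $[\mathfrak{T}(Q_I)] + (-1)^{k}[\mathfrak{T}(\mathcal{A}_k)]$; transposing that last term and using $-(-1)^k = (-1)^{k-1}$ gives $[\mathfrak{T}(Q_I)] = \sum_{i=1}^{k}(-1)^{i-1}[\mathfrak{T}(\mathcal{A}_i)]$, the asserted formula.

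The real work is in the lemma, and there the delicate point is the joint essential spectra: one must show that under the block-triangular decomposition $M = P \oplus P^{\perp}$ the joint essential spectrum of the tuple $(T^M_{z_1}, \dots, T^M_{z_m})$ is the union of those of its diagonal blocks, so that $\sigma^e(\mathfrak{T}(M)) = \sigma^e(\mathfrak{T}(P)) \cup \sigma^e(\mathfrak{T}(N))$ and all the classes in play can legitimately be compared inside $K_1(X)$. The remaining ingredients — that each $\Psi_i$ has closed range (it is $\ker \Psi_{i+1}$), that essential normality is in force for every $Q_i$, that the compressions to $P^{\perp}$ are \emph{essentially} unitarily equivalent to the Toeplitz operators of $M/P$, and that the extension-class additivity follows formally once the spectra match — are comparatively routine, as is the telescoping computation.
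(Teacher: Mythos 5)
Your overall strategy matches the paper's: cut the long exact sequence from Theorem~\ref{thm:main-intro} into short exact sequences whose middle terms are the $\mathcal{A}_i$, prove additivity of the Toeplitz extension class across each short exact sequence, and telescope. Your $Q_i$ are exactly the paper's $\mathcal{A}_i^-$, your short exact sequences are the paper's, and your key lemma is the paper's Corollary~\ref{cor:k-hom}; the telescoping and the final identification $[\mathfrak{T}(Q_I)] = [\mathfrak{T}(Q_1)]$ (via \cite[Proposition 4.4]{do-ta-yu:grr}) are the same. Where you differ is only in the proof of the lemma, and there your sketch is imprecise at exactly the two points that the paper's Proposition~\ref{prop:isometry} is designed to handle. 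First, the maps $W_1\colon P \to M$ and $W_2\colon M \to N$ in a short exact sequence of Hilbert modules are only bounded module morphisms, not isometries, so the restriction of $T^M_{z_j}$ to $W_1(P)$ and its compression to $W_1(P)^{\perp}$ are merely \emph{similar} to $T^P_{z_j}$ and $T^N_{z_j}$, not unitarily equivalent; upgrading to essential unitary equivalence in the Calkin algebra is precisely what the polar decomposition plus the Fuglede--Putnam theorem accomplish in the paper. Second, your phrase ``the off-diagonal corner is annihilated in the relevant subquotients'' understates what is true and needed: under the hypothesis that all of $P$, $M$, $N$ are essentially normal, a block-by-block computation of $[T^M_{z_j}, (T^M_{z_j})^*]$ forces the off-diagonal corner to be genuinely compact, so the Calkin-algebra image is block \emph{diagonal}, which is what yields both $\sigma^e(\mathfrak{T}(M)) = \sigma^e(\mathfrak{T}(P)) \cup \sigma^e(\mathfrak{T}(N))$ and the additivity of the extension class. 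These are exactly the algebraic relations $UU^* = VV^* = I$, $UV^* = 0$, $U^*U + V^*V = I$, $U\alpha_2 U^* = \alpha_1$, $V\alpha_2 V^* = \alpha_3$ that the paper extracts in Proposition~\ref{prop:isometry}. So this is essentially the paper's proof with its one technical proposition summarized rather than rederived; fleshing out those two points would make it complete.
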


As is explained in Section \ref{subsec:geometry} and Remark \ref{rmk:geometry}, every algebra $\mathfrak{T}(\mathcal{A}_i),\ i=1,\cdots, k$, can be identified as the algebra of Toeplitz operators on square integrable holomorphic sections of a hermitian vector bundle on  a disjoint union of subsets of $\ball^m$. This geometric interpretation allows us to use the ideas developed in  \cite{do-ta-yu:grr} and \cite{ee:geometric} to study the ``geometry" of the algebra $\mathfrak{T}(Q_I)$. 

We would like to remark that although our Theorem \ref{thm:main-intro} is stated for the closure of a monomial ideal inside the Bergman space $L^2_a(\ball^m)$, the same proof also works for the closure of any monomial ideal inside more general spaces, e.g. weighted Bergman spaces $L^2_{a, s}(\ball^m)$ and the Drury-Arveson space. 

Our result in Theorem \ref{thm:main-intro} can be viewed as a ``resolution" (\cite{hi:resolution}) of the ideal $\overline{I}$ by essentially normal Hilbert modules $\mathcal{A}_0, \cdots, \mathcal{A}_k$. Such an idea of ``resolution" goes back the first author's work in \cite{do:index}, and the study in this article should not be limited to monomial ideals.  In Section \ref{subsec:none-monomial}, we explain that a similar construction also works on a more general ideal in $\complex[z_1, z_2, z_3]$. We hope to report in the near future about a systematic study on extending ideas from Theorem \ref{thm:main-intro} to more  general ideals $I$. 

This article is organized as follows. In Section \ref{sec:generalized-Bergman}, we will introduce the main building block in our construction. In Section \ref{sec:resolution}, we will construct the Hilbert modules $\mathcal{A}_i$ and morphisms $\Psi_i,\ i=0, \cdots, k$ in Theorem \ref{thm:main-intro}, and prove  the main Theorem. Our proof is inspired by the corresponding algebraic study on monomial ideals \cite{co-li-osh:ideals}, \cite{he-hi:monomial}. We will end our paper by exhibiting our constructions on concrete examples in Section \ref{sec:example}. In particular, a non-monomial ideal example is discussed in Section \ref{subsec:none-monomial}. \\

\noindent{\bf Acknowledgements:} All authors are partially supported by U.S. NSF. Yu is partially supported by NSFC11420101001. 

\section{Generalized Bergman Space and the Associated Toeplitz Operators}\label{sec:generalized-Bergman}

In this section, we introduce and study a building block in the construction of the resolution in Theorem \ref{thm:main-intro}. 

\subsection{Notations}\label{subsec:box-hilbert}

We start with fixing some notations. For a positive integer $q$, we use the symbol $S_q(m)$ to denote the set of $q$-shuffles of the set $[m]=\{1,\cdots, m\}$, i.e. 
\[
S_{q}(m):=\{\mathfrak{j}:=(j^1, \cdots, j^q)| 1\leq j^1< j^2< \cdots < j^q\leq m\}. 
\]
Let $\mathbb{N}$ be the set of all nonnegative integers. For any $\mathfrak{i}=(i^1, \cdots, i^q)\in \mathbb{N}^q$, we use  $|\mathfrak{i}|$ to denote the sum $i^1+\cdots+i^q$.  

Fix $\mathfrak{j}=(j^1, \cdots, j^q)\in S_q(m)$, and $\mathfrak{b}=(b^1, \cdots, b^q)\in \mathbb{N}^q$.  We associate a subset $\mathcal{B}^{\mathfrak{b}}_{\mathfrak{j}}\subseteq \mathbb{N}^m$  defined by 
\[
\mathcal{B}^{\mathfrak{b}}_{\mathfrak{j}}:=\{(n^1,\cdots, n^m)\in \mathbb{N}^m| n^i\in \mathbb{N},\ i\notin \mathfrak{j},\ 0\leq n^i \leq b^{k},\ i=j^k\in \mathfrak{j}\}. 
\]
We call $\mathcal{B}^{\mathfrak{b}}_{\mathfrak{j}}$ the box associated to $\mathfrak{j}$ and $\mathfrak{b}$.

In the following, we introduce a Hilbert space $\mathcal{H}^{\mathfrak{b}}_{\mathfrak{j}}$ as a closed subspace of $L^2_a(\ball^m)$. 

On $\ball^m$, consider the weighted Bergman space $L^2_{a, s}(\ball^m)$ defined by square integrable holomorphic functions on $\ball^m$ with respect to the norm 
\[
||f||^2_{L^2_{a,s}}=\int_{\ball^m} |f(z)|^2 (1-|z|^2)^s \frac{(m+s)!}{m! s!} {\rm d}V(z), 
\]
where ${\rm{d}}V$ is the normalized Lebesgue measure on $\ball^m$. This Hilbert space has the following standard orthonormal basis
\begin{equation}\label{eq:basis}
\left\{z^{\mathfrak{n}}:=\frac{z_1^{n_1}\cdots z_m^{n_m}}{\sqrt{\omega_s(\mathfrak{n})}}\bigg|\ \mathfrak{n}=(n_1, \cdots, n_m)\in \mathbb{N}^m\right\}
\end{equation}
where $\omega_s(\mathfrak{n}):=\frac{n_1!\cdots n_m!(m+s)!}{(n_1+\cdots +n_m+s+m)!} $. 

\begin{definition}\label{defn:hilbert}The Hilbert space $\mathcal{H}^{\mathfrak{b}}_{\mathfrak{j}}$ is a closed subspace of $L^2_a(\ball^m)$ consisting of functions $f\in L^2_a(\ball^m)$ whose expansion $\sum_{\mathfrak{n}} f_{\mathfrak{n}}z^\mathfrak{n}$ with respect to the orthonormal basis $z^\mathfrak{n}$ satisfies 
\[
f_{\mathfrak{n}}=0,\ \text{for}\ \mathfrak{n}\notin \mathcal{B}^{\mathfrak{b}}_{\mathfrak{j}}. 
\]
\end{definition}

We have the following orthonormal basis for the Hilbert space $\mathcal{H}_{\mathfrak{j}}^{\mathfrak{b}}$,
\[
z^{\mathfrak{n}}:=\frac{z_1^{n^1}...z_{m}^{n^m}}{\sqrt{\omega_0(\mathfrak{n})}},
\]
where $0\leq n_{j^1}\leq b^1, \cdots, 0\leq n_{j^q}\leq b^q$, and the index $n_i$ for $i\notin \mathfrak{j}$ belongs to $\mathbb{N}$.  In terms of this basis, an element $X\in \mathcal{H}_{\mathfrak{j}}^{\mathfrak{b}}$ can be written as
\begin{equation}\label{eq:expression}
X=\sum_{0\leq n^{j^1}\leq b^1, \cdots , 0\leq n^{j^q}\leq b^q} X_{n^1\cdots n^m} z^{\mathfrak{n}}.
\end{equation}

In the following, we define a representation of the polynomial algebra $A=\complex[z_1, \cdots, z_m]$ on the Hilbert space $\mathcal{H}^{\mathfrak{b}}_{\mathfrak{j}}$. For $p=1,\cdots, m$, define the operator $T^{\mathfrak{j}, \mathfrak{b}}_{z_p} $ on the Hilbert space $\mathcal{H}^{\mathfrak{b}}_{\mathfrak{j}}$ by 
\[
T^{\mathfrak{j}, \mathfrak{b}}_{z_p}(z^{\mathfrak{n}}):=\left\{\begin{array}{ll} z_p z^{\mathfrak{n}},&\text{if}\ (n^1,\cdots, n^p+1, \cdots, n^m)\in \mathcal{B}^{\mathfrak{b}}_{\mathfrak{j}},\\ 0,&
\text{otherwise}. \end{array}\right.
\] 

\begin{lemma}\label{lem:bdd-rep} The above operators $\{T^{\mathfrak{j}, \mathfrak{b}}_{z_i}\}_{i=1}^m$ define a bounded representation of the algebra $A$ on the space $\mathcal{H}_{\mathfrak{j}}^{\mathfrak{b}}$. 
\end{lemma}
\begin{proof}Let $T_{z_p}$ be the standard Toeplitz representation of $z_p$ on the Bergman space $L^2_a(\ball^m)$, and $P^{\mathfrak{b}}_{\mathfrak{j}}$ be the orthogonal projection from $L^2_a(\ball^m)$ onto the closed subspace $\mathcal{H}_{\mathfrak{j}}^{\mathfrak{b}}$. Then the operator $T^{\mathfrak{j}, \mathfrak{b}}_{z_p}$ can be identified with $P^{\mathfrak{b}}_{\mathfrak{j}}T_{z_p}P^{\mathfrak{b}}_{\mathfrak{j}}$. It follows that $P^{\mathfrak{b}}_{\mathfrak{j}}T_{z_p}P^{\mathfrak{b}}_{\mathfrak{j}}$ is bounded. 

For $p\ne p'$, consider $\mathfrak{n}=(n^1, \cdots, n^m)\in \mathcal{B}^{\mathfrak{b}}_{\mathfrak{j}}$, and the corresponding basis vector $z^{\mathfrak{n}}$.

If $(n^1, \cdots, n^p+1, \cdots, n^{p'}+1, \cdots, n^m)\in \mathcal{B}^{\mathfrak{b}}_{\mathfrak{j}}$, both $(n^1, \cdots, n^p+1, \cdots, n^{p'}, \cdots, n^m)$ and $(n^1, \cdots, n^p,\cdots, n^{p'}+1, \cdots, n^m)$ belong to $\mathcal{B}^{\mathfrak{b}}_{\mathfrak{j}}$. Hence, we have 
\[
T^{\mathfrak{j}, \mathfrak{b}}_{z_{p'}}\big(T^{\mathfrak{j}, \mathfrak{b}}_{z_p}(z^\mathfrak{n})\big)=T^{\mathfrak{j}, \mathfrak{b}}_{z_p}\big(T^{\mathfrak{j}, \mathfrak{b}}_{z_{p'}}(z^{\mathfrak{n}})\big)=z_pz_{p'}z^{\mathfrak{n}}. 
\]

If $(n^1, \cdots, n^p+1, \cdots, n^{p'}+1, \cdots, n^m)\notin \mathcal{B}^{\mathfrak{b}}_{\mathfrak{j}}$ but $(n^1, \cdots, n^p, \cdots, n^{p'}, \cdots, n^m)\in \mathcal{B}^{\mathfrak{b}}_{\mathfrak{j}}$, either $p=j^s$ and $n^p=b^s$ for some $s$ or $p'=j^s$ and $n^{p'}=b^s$. Without loss of generality, we assume that $p=j^s$ and $n^p=b^s$ for some $s$. Then  $(n^1, \cdots, n^p+1, \cdots, n^{p'}, \cdots, n^m)$ does not belong to the box $\mathcal{B}^{\mathfrak{b}}_{\mathfrak{j}}$.  We conclude that 
\[
T^{\mathfrak{j}, \mathfrak{b}}_{z_p}(z^\mathfrak{n})=0,\ \text{and}\ T^{\mathfrak{j}, \mathfrak{b}}_{z_{p'}}\big(T^{\mathfrak{j}, \mathfrak{b}}_{z_p}(z^\mathfrak{n})\big)=T^{\mathfrak{j}, \mathfrak{b}}_{z_p}\big(T^{\mathfrak{j}, \mathfrak{b}}_{z_{p'}}(z^{\mathfrak{n}})\big)=0. 
\]

Summarizing the above discussion, we conclude that 
$$T^{\mathfrak{j}, \mathfrak{b}}_{z_{p'}}T^{\mathfrak{j}, \mathfrak{b}}_{z_p}=T^{\mathfrak{j}, \mathfrak{b}}_{z_p}T^{\mathfrak{j}, \mathfrak{b}}_{z_{p'}}.$$
\end{proof}

\subsection{Essentially normal property}
In this subsection, we prove the following property of the Hilbert $A$-module $\mathcal{H}_{\mathfrak{j}}^{\mathfrak{b}}$. 
\begin{proposition}\label{prop:ess-norm-box} The following commutators are compact. 
\[
[{T^{\mathfrak{j}, \mathfrak{b}}_{z_s}}^*, T^{\mathfrak{j}, \mathfrak{b}}_{z_t}] \in \mathcal{K}(\mathcal{H}_{\mathfrak{j}}^{ \mathfrak{b}}),\ \forall s,t=1, \cdots, m. 
\]
Therefore, $\mathcal{H}_{\mathfrak{j}}^{ \mathfrak{b}}$ is an essentially normal Hilbert $A$-module. 
\end{proposition}
Consider the monomial ideal $I=\langle z^{b^1+1}_{j^1}, \cdots, z^{b^q+1}_{j^q}\rangle$ of the polynomial algebra $A$. Then the Hilbert module $\mathcal{H}_{\mathfrak{j}}^\mathfrak{b}$ can be identified with the quotient module $Q_I:=L^2 _a(\ball^m)/\overline{I}$. And Proposition \ref{prop:ess-norm-box} follows from  \cite{ar:monomial} and \cite{do:module}. Here, we outline its proof here for completeness. 

\begin{proof}
Let $P^{\mathfrak{b}}_{\mathfrak{j}}$ be the orthogonal projection from $L^2_a(\ball^m)$ onto the closed subspace $\mathcal{H}_{\mathfrak{j}}^{\mathfrak{b}}$. As $T^{\mathfrak{j}, \mathfrak{b}}_{z_s}$ can be written as $P^{\mathfrak{b}}_{\mathfrak{j}} T_{z_s} P^{\mathfrak{b}}_{\mathfrak{j}}$, it suffices to prove the commutator 
$[P^{\mathfrak{b}}_{\mathfrak{j}}, T_{z_s}]$ is compact. 

For $\mathfrak{n}\in  \mathcal{B}^{\mathfrak{b}}_{\mathfrak{j}}$, $P^{\mathfrak{b}}_{\mathfrak{j}}T_{z_s}(z^{\mathfrak{n}})$ is computed as follows, 
\[
P^{\mathfrak{b}}_{\mathfrak{j}}T_{z_s}(z^\mathfrak{n})=\left\{ \begin{array}{ll} \sqrt{\frac{\omega_0(n^1\cdots (n^s+1)\cdots n^m)}{\omega_0(n^1\cdots n^m)}} z^{n^1\cdots (n^s+1)\cdots n^m},&\text{if}\ (n^1\cdots (n^s+1)\cdots n^m)\in  \mathcal{B}^{\mathfrak{b}}_{\mathfrak{j}}\\ 0,&\text{otherwise}.
\end{array}\right.
\]
Similarly, $T_{z_s}P^{\mathfrak{b}}_{\mathfrak{j}}(z^{\mathfrak{n}})$ is computed as follows,
\[
T_{z_s}P^{\mathfrak{b}}_{\mathfrak{j}}(z^{\mathfrak{n}})=\left\{ \begin{array}{ll} \sqrt{\frac{\omega_0(n^1\cdots (n^s+1)\cdots n^m)}{\omega_0(n^1\cdots n^m)}} z^{n^1\cdots (n^s+1)\cdots n^m},&\text{if}\ (n^1\cdots n^s\cdots n^m)\in  \mathcal{B}^{\mathfrak{b}}_{\mathfrak{j}}\\ 0,&\text{otherwise}.
\end{array}\right.
\]

Hence, $[P^{\mathfrak{b}}_{\mathfrak{j}}, T_{z_s}](z^{\mathfrak{n}})$ is computed as follows,
\[
[P^{\mathfrak{b}}_{\mathfrak{j}}, T_{z_s}](z^{\mathfrak{n}}):=\left\{\begin{array}{ll}
-\sqrt{\frac{\omega_0(n^1\cdots (b^{k}+1)\cdots n^m)}{\omega_0(n^1\cdots b^k\cdots n^m)}} z^{n^1\cdots (n^s+1)\cdots n^m},&\text{if}\ (n^1\cdots n^s\cdots n^m)\in  \mathcal{B}^{\mathfrak{b}}_{\mathfrak{j}},\\
& \text{and}\ s=j^k,\ \text{and}\ n^s=b^k\ \text{for some}\ k,\\ 0,&\text{otherwise}.
\end{array}\right.
\]
We observe that the weight $\frac{\omega_0(n^1\cdots (b^{k}+1)\cdots n^m)}{\omega_0(n^1\cdots b^k\cdots n^m)}$ converges to zero as $||(n^1, \cdots, b^k, \cdots, n^m)||\to \infty$. From this, we can conclude that the commutator $[P^{\mathfrak{b}}_{\mathfrak{j}}, T_{z_s}]$ is compact. 
\end{proof}

\begin{remark}It is not hard to check in the above proof of Proposition \ref{prop:ess-norm-box} that the commutator $[P^{\mathfrak{b}}_{\mathfrak{j}}, T_{z_s}]$ and therefore also $[{T^{\mathfrak{j}, \mathfrak{b}}_{z_s}}^*, T^{\mathfrak{j}, \mathfrak{b}}_{z_t}] $ belong to the Schatten-$p$ ideal for $p>m-q$. 
\end{remark}

\subsection{Geometry of the Hilbert space $\mathcal{H}_{\mathfrak{j}}^{\mathfrak{b}}$}\label{subsec:geometry}

In this subsection, we discuss briefly the geometry of the Hilbert space $\mathcal{H}_{\mathfrak{j}}^{\mathfrak{b}}$ introduced in Section \ref{subsec:box-hilbert}. 

Let $\ball_{\mathfrak{j}}$ be the subset of $\ball^m$ cut by the hyperplanes $H_{j^i}:=\{z_{j^i}=0\}$, $i=1, \cdots, q$, i.e. 
\[
\ball_{\mathfrak{j}}:=\{(z_1, \cdots, z_m)\in \ball^m|\ z_{j^1}=\cdots =z_{j^q}=0\}. 
\]
Observe that $\ball_{\mathfrak{j}}$ is the unit ball inside the subspace 
\[
H_{\mathfrak{j}}:=\{(z_1,\cdots, z_m)| z_{j^1}=\cdots=z_{j^q}=0\},
\]
which is isomorphic to the standard complex space $\mathbb{C}^{m-q}$. 

For $\mathfrak{i}=(i^1, \cdots ,i^q)\in \mathbb{N}^q$, we consider the following weighted Bergman space $L^2_{a, q+|\mathfrak{i}|}(\ball_{\mathfrak{j}})$. And given any $\mathfrak{b}\in \mathbb{N}^q$, we consider the following Hilbert space 
\[
\widetilde{\mathcal{H}}_{\mathfrak{j}}^{\mathfrak{b}}:=\bigoplus _{\mathfrak{i}\in \mathbb{N}^q, i^1\leq b^1, \cdots, i^q\leq b^q} L^2_{a, q+|\mathfrak{i}|}(\ball_{\mathfrak{j}}). 
\]

We define a map $R^{\mathfrak{b}}_{\mathfrak{j}}: \mathcal{H}_{\mathfrak{j}}^{\mathfrak{b}}\to \widetilde{\mathcal{H}}_{\mathfrak{j}}^{\mathfrak{b}}$ by 
\[
R^{\mathfrak{b}}_{\mathfrak{j}}(f):=\sum_{i^1\leq b^1, \cdots, i^q\leq b^q} R^{\mathfrak{i}}_{\mathfrak{j}}(f),
\]
where $R^{\mathfrak{i}}_{\mathfrak{j}}(f)\in L^2_{a, q+|\mathfrak{i}|}(\ball_{\mathfrak{j}})$ is defined as
\[
R^{\mathfrak{i}}_{\mathfrak{j}}(f):=\frac{\partial^{ i^1+\cdots +i^q} f}{\partial z_{j_1}^{i^1}\cdots \partial z_{j_q}^{i^q}}\Big|_{\ball_{\mathfrak{j}}}.
\]

A straightforward computation on the orthonormal basis gives the following property, and we leave the detail proof to the reader. 
\begin{proposition}\label{prop:R}
The map $R^{\mathfrak{b}}_{\mathfrak{j}}$ is an isomorphism of Hilbert spaces. 
\end{proposition}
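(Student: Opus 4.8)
The plan is to compute $R^{\mathfrak b}_{\mathfrak j}$ directly on the standard orthonormal basis of $\mathcal H^{\mathfrak b}_{\mathfrak j}$ and verify that, with respect to the natural orthonormal bases on the two sides, it is ``block diagonal'', each block being a nonzero scalar multiple of an identity operator. Since there are only finitely many blocks, boundedness of $R^{\mathfrak b}_{\mathfrak j}$ and of its inverse will then be immediate. Concretely, split each $\mathfrak n\in\mathcal B^{\mathfrak b}_{\mathfrak j}$ into its ``box part'' $\mathfrak a=(n^{j^1},\dots,n^{j^q})$, which ranges over $0\le a^k\le b^k$, and its ``free part'' $\mathfrak n'\in\mathbb N^{m-q}$ on the complementary coordinates, and write $z^{(\mathfrak a,\mathfrak n')}$ for the corresponding basis vector. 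Applying $\partial_{z_{j^1}}^{i^1}\cdots\partial_{z_{j^q}}^{i^q}$ to $z_{j^1}^{a^1}\cdots z_{j^q}^{a^q}$ and then restricting to $\ball_{\mathfrak j}=\{z_{j^1}=\cdots=z_{j^q}=0\}$ gives $0$ unless $\mathfrak a=\mathfrak i$, in which case one picks up exactly the factor $i^1!\cdots i^q!$. Hence $R^{\mathfrak i}_{\mathfrak j}(z^{(\mathfrak a,\mathfrak n')})=0$ for $\mathfrak a\ne\mathfrak i$, while $R^{\mathfrak i}_{\mathfrak j}(z^{(\mathfrak i,\mathfrak n')})=i^1!\cdots i^q!\cdot (z')^{\mathfrak n'}/\sqrt{\omega_0(\mathfrak i,\mathfrak n')}$ as a function on $\ball_{\mathfrak j}\cong\ball^{m-q}$, where $\omega_0(\mathfrak i,\mathfrak n')$ is the weight $\omega_0$ from \eqref{eq:basis} evaluated at the multi-index whose $j^k$-coordinate equals $i^k$ and whose remaining coordinates are those of $\mathfrak n'$.

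The one genuine point is the factorial identity $\omega_0(\mathfrak i,\mathfrak n')=c_{\mathfrak i}\,\omega^{(m-q)}_{q+|\mathfrak i|}(\mathfrak n')$, where $\omega^{(m-q)}_{s}$ is the weight in the standard orthonormal basis of the weighted Bergman space $L^2_{a,s}(\ball^{m-q})$ and $c_{\mathfrak i}=\frac{i^1!\cdots i^q!\,m!}{(m+|\mathfrak i|)!}$ is \emph{independent of $\mathfrak n'$}. This follows by cancelling factorials, the key observation being $(m-q)+(q+|\mathfrak i|)=m+|\mathfrak i|$ and $|\mathfrak n'|+(q+|\mathfrak i|)+(m-q)=|\mathfrak n'|+|\mathfrak i|+m$, so the $\mathfrak n'$-dependent factor $(|\mathfrak n'|+|\mathfrak i|+m)!$ in the two denominators cancels. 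Consequently $(z')^{\mathfrak n'}/\sqrt{\omega_0(\mathfrak i,\mathfrak n')}$ equals $c_{\mathfrak i}^{-1/2}$ times the $\mathfrak n'$-th orthonormal basis vector of $L^2_{a,q+|\mathfrak i|}(\ball_{\mathfrak j})$.

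Combining the two steps, $R^{\mathfrak b}_{\mathfrak j}$ sends the orthonormal basis vector $z^{(\mathfrak a,\mathfrak n')}$ of $\mathcal H^{\mathfrak b}_{\mathfrak j}$ to $d_{\mathfrak a}$ times the $\mathfrak n'$-th orthonormal basis vector of the summand $L^2_{a,q+|\mathfrak a|}(\ball_{\mathfrak j})$ of $\widetilde{\mathcal H}^{\mathfrak b}_{\mathfrak j}$, where $d_{\mathfrak a}=\big((a^1!\cdots a^q!)(m+|\mathfrak a|)!/m!\big)^{1/2}>0$ depends only on $\mathfrak a$. Thus $R^{\mathfrak b}_{\mathfrak j}$ is the orthogonal direct sum, over the $\prod_{k=1}^q(b^k+1)$ admissible $\mathfrak a$, of the maps $d_{\mathfrak a}\cdot\mathrm{Id}$ from $\overline{\operatorname{span}}\{z^{(\mathfrak a,\mathfrak n')}\}_{\mathfrak n'}\subset\mathcal H^{\mathfrak b}_{\mathfrak j}$ onto $L^2_{a,q+|\mathfrak a|}(\ball_{\mathfrak j})$; in particular this same computation shows each $R^{\mathfrak i}_{\mathfrak j}(f)$ genuinely lies in $L^2_{a,q+|\mathfrak i|}(\ball_{\mathfrak j})$, so $R^{\mathfrak b}_{\mathfrak j}$ is well defined. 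As there are only finitely many blocks, the scalars $d_{\mathfrak a}$ and $d_{\mathfrak a}^{-1}$ are uniformly bounded, whence $R^{\mathfrak b}_{\mathfrak j}$ is a bounded linear bijection with bounded inverse, i.e. an isomorphism of Hilbert spaces. The main obstacle — essentially the only non-routine point — is the factorial identity making $c_{\mathfrak i}$ independent of $\mathfrak n'$; everything else is bookkeeping with orthonormal bases.
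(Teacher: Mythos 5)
Your computation is correct and is precisely the ``straightforward computation on the orthonormal basis'' that the paper invokes without writing out; the key verification — that the shift $s=q+|\mathfrak i|$ in the weight makes $\omega_0(\mathfrak i,\mathfrak n')/\omega^{(m-q)}_{q+|\mathfrak i|}(\mathfrak n')$ independent of $\mathfrak n'$, since the $\mathfrak n'$-dependent factor $(|\mathfrak n'|+|\mathfrak i|+m)!$ cancels — is exactly the point that makes the block-diagonal picture work, and your finitely-many-positive-scalars argument correctly yields a bounded bijection with bounded inverse (note $R^{\mathfrak b}_{\mathfrak j}$ is not unitary, the $d_{\mathfrak a}$ are not $1$, but ``isomorphism of Hilbert spaces'' here means topological isomorphism, which suffices for the subsequent identification of Toeplitz algebras in Section~\ref{subsec:geometry}).
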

\begin{remark}
Proposition \ref{prop:R} suggests that our general construction is a proper generalization of the Example of ideal $\langle z_1^2\rangle$ in the Introduction. 
\end{remark}

We consider the trivial vector bundle $E^\mathfrak{b}_{\mathfrak{j}}:=\complex^{(b^1+1)\cdots(b^q+1)}\times \ball_{\mathfrak{j}}$ over $\ball_{\mathfrak{j}}$. The hermitian structure on $E^{\mathfrak{b}}_{\mathfrak{j}}$ is defined as follows. We choose the standard basis of $\{e_{\mathfrak{i}}\}_{i^1\leq b^1, \cdots, i^q\leq b^q}$ of $\complex ^{(b^1+1)\cdots (b^q+1)}$. The Hermitian metric on $E^{\mathfrak{b}}_{\mathfrak{j}}$ at $z\in \ball_{\mathfrak{j}}$ is 
\[
\langle e_{\mathfrak{i}}, e_{\mathfrak{i}'}\rangle_{z}=\delta_{\mathfrak{i}, \mathfrak{i}'}(1-|z|^2)^{q+|\mathfrak{i}|}. 
\]
It is not hard to see that the Hilbert space $\widetilde{\mathcal{H}}_{\mathfrak{j}}^{\mathfrak{b}}$ can be identified with the Bergman space of $L^2$-holomorphic sections of the bundle  $E^{\mathfrak{b}}_{\mathfrak{j}}$. We consider the Toeplitz algebra $\mathfrak{T}(E^{\mathfrak{b}}_{\mathfrak{j}})$ generated by matrix valued Toeplitz operators on the Bergman space of $L^2$-holomorphic sections. Under the isomorphism $R^{\mathfrak{b}}_{\mathfrak{j}}$, one can easily identify the Toeplitz algebra $\mathfrak{T}^{\mathfrak{b}}_{\mathfrak{j}}$ generated by  $T^{\mathfrak{j}, \mathfrak{b}}_{z_i}$, $i=1, \cdots, m$ on $\mathcal{H}_{\mathfrak{j}}^{\mathfrak{b}}$, with the Toeplitz algebra $\mathfrak{T}(E^{\mathfrak{b}}_{\mathfrak{j}})$ on $\widetilde{\mathcal{H}}_{\mathfrak{j}}^{\mathfrak{b}}$. 

\section{Resolutions of Monomial Ideals}\label{sec:resolution}

In this section, we present the proof of the main theorem (Theorem \ref{thm:main-intro}) of this article. In the first three subsections, we generalize the discussion in Section \ref{sec:generalized-Bergman} to construct an exact sequence of Hilbert $A$-modules associated to $k$ boxes in $\mathbb{N}^m$. And in Section \ref{subsec:proof}, we apply our construction to prove Theorem \ref{thm:main-intro}.  
\subsection{Construction of the Hilbert $\complex[z_1,\cdots, z_m]$-modules} \label{subsec:hilbert-mod}
Let $\mathcal{B}_{\mathfrak{j}_1}^{\mathfrak{b}_1},\cdots, \mathcal{B}_{\mathfrak{j}_k}^{\mathfrak{b}_k}$ be $k$ number of boxes in $\mathbb{N}^m$. We start with the following easy property of boxes in $\mathbb{N}^m$.
\begin{lemma}\label{prop:inter-box} Intersections of boxes in $\mathbb{N}^m$ are again boxes. 
\end{lemma}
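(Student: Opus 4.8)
The plan is to characterize boxes intrinsically and then observe that intersection preserves the characterization. Recall from Section~\ref{subsec:box-hilbert} that a box $\mathcal{B}^{\mathfrak{b}}_{\mathfrak{j}}$ is cut out by imposing, for each coordinate index $i$, either no constraint at all (when $i\notin\mathfrak{j}$) or an upper bound $0\le n^i\le b^k$ (when $i=j^k\in\mathfrak{j}$); in all cases the lower bound $n^i\ge 0$ is inherited from $\mathbb{N}^m$. In other words, a subset $S\subseteq\mathbb{N}^m$ is a box precisely when it is of the product form $S=\prod_{i=1}^m C_i$, where each factor $C_i\subseteq\mathbb{N}$ is either all of $\mathbb{N}$ or a finite initial segment $\{0,1,\dots,c_i\}$. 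So the first step is to make this "coordinatewise product of downward-closed intervals" reformulation explicit.

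**Next I would** take two boxes $\mathcal{B}^{\mathfrak{b}}_{\mathfrak{j}}=\prod_i C_i$ and $\mathcal{B}^{\mathfrak{b}'}_{\mathfrak{j}'}=\prod_i C'_i$ and compute their intersection coordinatewise: $\mathcal{B}^{\mathfrak{b}}_{\mathfrak{j}}\cap\mathcal{B}^{\mathfrak{b}'}_{\mathfrak{j}'}=\prod_i(C_i\cap C'_i)$. The only thing to check is that $\mathbb{N}$ and initial segments are closed under pairwise intersection: $\mathbb{N}\cap\mathbb{N}=\mathbb{N}$; $\mathbb{N}\cap\{0,\dots,c\}=\{0,\dots,c\}$; and $\{0,\dots,c\}\cap\{0,\dots,c'\}=\{0,\dots,\min(c,c')\}$. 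Hence each factor of the intersection is again either $\mathbb{N}$ or a finite initial segment, so the product is again a box. Concretely, the shuffle of the intersection is $\mathfrak{j}''=\mathfrak{j}\cup\mathfrak{j}'$ (listed in increasing order), and the bound attached to an index $i\in\mathfrak{j}''$ is $b^k$ if $i\in\mathfrak{j}\setminus\mathfrak{j}'$, is $b'^{k'}$ if $i\in\mathfrak{j}'\setminus\mathfrak{j}$, and is $\min(b^k,b'^{k'})$ if $i$ lies in both. An induction on the number of boxes then gives the statement for an arbitrary finite intersection, which is the form in which Lemma~\ref{prop:inter-box} will be used.

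**I do not expect any real obstacle here** — the lemma is essentially a bookkeeping observation, and the only mild care needed is notational: keeping the shuffle ordering straight when one merges $\mathfrak{j}$ and $\mathfrak{j}'$, and handling the degenerate cases where a box is all of $\mathbb{N}^m$ (empty shuffle) or where the two shuffles are disjoint. If one prefers to avoid introducing the product notation, one can argue directly from the defining inequalities: a point $(n^1,\dots,n^m)$ lies in both boxes iff for every $i$ it satisfies all the constraints from both, and "$n^i\le b$ and $n^i\le b'$" is the single constraint "$n^i\le\min(b,b')$", so the conjunction of two box-membership conditions is again a box-membership condition. Either way the proof is two or three lines; the content of the section lies in what is built on top of this lemma.
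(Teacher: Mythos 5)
Your proof is correct and follows essentially the same route as the paper: intersect two boxes by taking $\mathfrak{j}_{12}=\mathfrak{j}_1\cup\mathfrak{j}_2$, attach the minimum of the two bounds on any coordinate constrained by both, keep the single bound when only one constrains, and induct for more boxes. The "product of intervals" reformulation you lead with is a clean way of packaging the same coordinatewise check the paper makes.
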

\begin{proof}
Let us consider two boxes $\mathcal{B}^{\mathfrak{b}_1}_{\mathfrak{j}_1}$ and $\mathcal{B}^{\mathfrak{b}_2}_{\mathfrak{j}_2}$. Let $\mathfrak{j}_{12}$ be the union of $\mathfrak{j}_1$ and $\mathfrak{j}_2$. Suppose that there are $q_{12}$ elements in $\mathfrak{j}_{12}$. Then $\mathfrak{j}_{12}$ can be viewed as an element of $S_{q_{12}}(m)$. We write $\mathfrak{j}_{12}$ as $\mathfrak{j}_{12}:=(j_{12}^1, \cdots, j^{q_{12}}_{12})$. Define $\mathfrak{b}_{12}$ to be an element $\mathfrak{b}_{12}:=(b^1_{12}, \cdots, b^{q_{12}}_{12})$ in $\mathbb{N}^{q_{12}}$ by 
\[
b^k_{12}:=\left\{\begin{array}{ll}\operatorname{min}(b_1^{s}, b_2^{{s'}}),& j_{12}^k=j_1^s=j_2^{s'}\\
						b^{s}_1,& j_{12}^k=j_1^s\notin \mathfrak{j}_1\cap \mathfrak{j}_2 \\
						b^{{s'}}_2,& j_{12}^k=j_2^{s'}\notin \mathfrak{j}_1\cap \mathfrak{j}_2.
		        \end{array}
		\right.
\] 
It is easy to check that the intersection of $\mathcal{B}^{\mathfrak{b}_1}_{\mathfrak{j}_1}$ and $\mathcal{B}^{\mathfrak{b}_2}_{\mathfrak{j}_2}$ is $\mathcal{B}^{\mathfrak{b}_{12}}_{\mathfrak{j}_{12}}$. The general case of the lemma can be proved by induction from the above proof for two boxes. 
\end{proof}

 For any subset $I\subset \{ 1, \cdots, k\}$, we use $\mathcal{B}_{\mathfrak{j}_I}^{\mathfrak{b}_I}$ to denote the following intersection, 
\[
\mathcal{B}_{\mathfrak{j}_I}^{\mathfrak{b}_I}:=\bigcap\limits_{i\in I} \mathcal{B}_{\mathfrak{j}_i}^{\mathfrak{b}_i}. 
\] 
For each box $\mathcal{B}_{\mathfrak{j}_I}^{\mathfrak{b}_I}$, we consider the corresponding Hilbert $A=\complex[z_1, \cdots, z_m]$ module $\mathcal{H}_{\mathfrak{j}_I}^{\mathfrak{b}_I}$ as is introduced in Section \ref{sec:generalized-Bergman}. It is not hard to see that subsets of size $q$ in $\{1, \cdots, k\}$ are in 1-1 correspondence with elements in $S_{q}(k)$.  

Given $\mathcal{B}_{\mathfrak{j}_1}^{\mathfrak{b}_1},\cdots, \mathcal{B}_{\mathfrak{j}_k}^{\mathfrak{b}_k}$, for $1\leq q\leq k$, we define a Hilbert  module $\mathcal{A}_q$ as follows. 
\[
\mathcal{A}_q:=\bigoplus\limits_{I\in S_q(k)} \mathcal{H}_{\mathfrak{j}_I}^{\mathfrak{b}_I}.
\]
For convenience, we use $\mathcal{A}_0$ to denote the Bergman space $L^2_a(\ball^m)$.  We remark that every Hilbert space $\mathcal{A}_q$ is equipped with a Hilbert $A$-module structure from the corresponding $A$-module structure on each component  $\mathcal{H}_{\mathfrak{j}_I}^{\mathfrak{b}_I}$. It follows from Proposition \ref{prop:ess-norm-box} that each $\mathcal{A}_q$ is an essentially normal Hilbert $A$-module. 

\subsection{Morphisms}\label{subsec:morphism}
In this subsection, we define the boundary morphism $\Psi_q:\mathcal{A}_q\to \mathcal{A}_{q+1}$ for $q=0, \cdots, k-1$. In the following, we heavily use the expression introduced in Equation (\ref{eq:expression}).  To explain our construction, we start with a few examples with a small number $k$ of boxes $\mathcal{B}_{\mathfrak{j}_1}^{\mathfrak{b}_1},\cdots, \mathcal{B}_{\mathfrak{j}_k}^{\mathfrak{b}_k}$ in $\mathbb{N}^m$. 

When $k=1$, there is only one box $\mathcal{B}_{\mathfrak{j}}^{\mathfrak{b}}$. We have two Hilbert modules $\mathcal{A}_0=L^2_a(\ball^m)$ and $\mathcal{A}_1=\mathcal{H}^{\mathfrak{b}}_{\mathfrak{j}}$.  We define $\Psi_0: \mathcal{A}_0\to \mathcal{A}_1$ as follows. For $X\in \mathcal{A}_0=L^2_a(\ball^m)$, the map $\Psi_0$ maps $X$ to $Y:=\Psi_0(X)\in \mathcal{H}_{\mathfrak{j}} ^{\mathfrak{b}}$ of the following form, 
\[
Y_{n^1\cdots n^m}:=\left\{\begin{array}{ll}X_{n^1\cdots n^m},& \mathfrak{n}\in \mathcal{B}_{\mathfrak{j}}^{\mathfrak{b}}\\ 0,&\text{otherwise}.\end{array}\right. 
\]

When $k=2$, there are two boxes $\mathcal{B}_{\mathfrak{j}_1}^{\mathfrak{b}_1}$ and $\mathcal{B}_{\mathfrak{j}_2}^{\mathfrak{b}_2}$. We denote their intersection by $\mathcal{B}_{\mathfrak{j}_{12}} ^{\mathfrak{b}_{12}}$.  For $X\in \mathcal{A}_0=L^2_a(\ball^m)$, $\Psi_0(X)$ is written as $Y_1+Y_2$, where $Y_1\in \mathcal{H}^{\mathfrak{b}_1}_{\mathfrak{j}_1}$ and $Y_2\in \mathcal{H}^{\mathfrak{b}_2}_{\mathfrak{j}_2}$ are of the following form, 
\[
(Y_1)_\mathfrak{n}:=\left\{\begin{array}{ll} X_{\mathfrak{n}},&\mathfrak{n}\in \mathcal{B}_{\mathfrak{j}_1}^{\mathfrak{b}_1},
\\ 0,&\text{otherwise}, \end{array}\right.,\qquad (Y_2)_{\mathfrak{n}}:=\left\{\begin{array}{ll} X_{\mathfrak{n}},&\mathfrak{n}\in \mathcal{B}_{\mathfrak{j}_2}^{\mathfrak{b}_2},
\\ 0,&\text{otherwise}. \end{array}\right.
\]

For $(X_1, X_2)\in \mathcal{H}^{\mathfrak{b}_1}_{\mathfrak{j}_1}\oplus \mathcal{H}^{\mathfrak{b}_2}_{\mathfrak{j}_2}=\mathcal{A}_1$, define $\Psi_1(X_1, X_2)\in \mathcal{A}_2$ by
\[
\Psi_1(X_1, X_2)_{\mathfrak{n}}:=\left\{\begin{array}{ll} (X_1)_{\mathfrak{n}}-(X_2)_{\mathfrak{n}},& \mathfrak{n}\in \mathcal{B}_{\mathfrak{j}_{12}}^{\mathfrak{b}_{12}},\\ 0,&\text{otherwise}. \end{array}\right.
\]

For a general $k$,  in order to define the morphism $\Psi_q: \mathcal{A}_q\to \mathcal{A}_{q+1}$, we introduce the following maps $f^i_{q+1}: S_{q+1}(k)\to S_{q}(k)$ for $i=1, \cdots, q+1$. An element in $S_{q+1}(k)$ is a subset $I_{q+1}$ of $\{1,\cdots, k\}$ of size $q+1$. The map $f^i_{q+1}(I_{q+1})$ is the subset of $\{1, \cdots, k\}$ of size $q$ by dropping the $i$-th smallest element in $I_{q+1}$. Define $\Psi_q: \mathcal{A}_q\to \mathcal{A}_{q+1}$ by 
\[
\Psi_q(X):=\sum_{I_{q+1}\in S_{q+1}(k)} Y^{I_{q+1}},\ Y^{I_{q+1}}\in \mathcal{H}_{\mathfrak{j}_{I_{q+1}}}^{\mathfrak{b}_{I_{q+1}}},
\]
for $X=\sum_{J_k\in S_q(k)} X^{J_k}$ with $X^{J_k}\in  \mathcal{H}_{\mathfrak{j}_{J_{k}}}^{\mathfrak{b}_{J_{k}}}$. The function $Y^{I_{q+1}}\in \mathcal{H}_{\mathfrak{j}_{I_{q+1}}}^{\mathfrak{b}_{I_{q+1}}}$ is defined by 
\[
(Y^{I_{q+1}})_{\mathfrak{n}}:=\left\{\begin{array}{ll}\sum_{i=1}^{q+1}(-1)^{i-1} (X^{f^i_{q+1}(I_{q+1})})_{\mathfrak{n}},&\mathfrak{n}\in \mathcal{B}_{\mathfrak{j}_{I_{q+1}}}^{\mathfrak{b}_{I_{q+1}} },\\
										0,&\text{otherwise}.
										\end{array}\right.
\]

\begin{remark}
\label{rmk:geometry} Similar to the explanation in Section \ref{subsec:geometry}, the Hilbert module $\mathcal{A}_i$, $i=1,\cdots, k$, can be identified with  the Bergman space of $L^2$-holomorphic sections of a hermitian vector bundle on a disjoint union of subsets of  the unit ball $\ball^m$. Under this identification, the module morphisms $\Psi_i$, $i=0, \cdots, k-1$, can be realized as restriction maps of jets of holomorphic sections to the subsets. Though we are not using this geometric picture heavily in this paper, we would like to mention it to the reader as we believe that such a geometric picture will play a crucial role in the future study about more general ideals. 
\end{remark}
\subsection{Properties of the box resolution}
In this subsection, we study properties of the box resolutions. 

\begin{proposition}\label{prop:bounded}$\forall q\geq 0$, the morphism $\Psi_q: \mathcal{A}_q\to \mathcal{A}_{q+1}$ is bounded. 
\end{proposition}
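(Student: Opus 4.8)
The plan is to reduce the boundedness of each $\Psi_q$ to the boundedness of the building-block maps appearing in Section~\ref{subsec:box-hilbert}. First I would observe that, by the very definition of $\mathcal{A}_q=\bigoplus_{J\in S_q(k)}\mathcal{H}_{\mathfrak{j}_J}^{\mathfrak{b}_J}$ and $\mathcal{A}_{q+1}=\bigoplus_{I\in S_{q+1}(k)}\mathcal{H}_{\mathfrak{j}_I}^{\mathfrak{b}_I}$, the morphism $\Psi_q$ is built block-by-block from maps of the form
\[
\pi_{J,I}\colon \mathcal{H}_{\mathfrak{j}_J}^{\mathfrak{b}_J}\longrightarrow \mathcal{H}_{\mathfrak{j}_I}^{\mathfrak{b}_I},\qquad
(\pi_{J,I}X)_{\mathfrak{n}}=\begin{cases} X_{\mathfrak{n}},&\mathfrak{n}\in\mathcal{B}_{\mathfrak{j}_I}^{\mathfrak{b}_I},\\ 0,&\text{otherwise},\end{cases}
\]
where $J=f^i_{q+1}(I)$ for some $i$ (and $J\subset I$, so $\mathcal{B}_{\mathfrak{j}_I}^{\mathfrak{b}_I}\subseteq\mathcal{B}_{\mathfrak{j}_J}^{\mathfrak{b}_J}$ by Lemma~\ref{prop:inter-box}), together with the analogous restriction $L^2_a(\ball^m)\to\mathcal{H}_{\mathfrak{j}_I}^{\mathfrak{b}_I}$ when $q=0$. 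So it suffices to bound each $\pi_{J,I}$ and then sum over the finitely many index pairs.

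The key point is that $\pi_{J,I}$ is nothing but a \emph{coordinate projection} with respect to the monomial orthonormal basis $\{z^{\mathfrak{n}}\}$. Indeed, both $\mathcal{H}_{\mathfrak{j}_J}^{\mathfrak{b}_J}$ and $\mathcal{H}_{\mathfrak{j}_I}^{\mathfrak{b}_I}$ are, by Definition~\ref{defn:hilbert} and the orthonormal basis discussion preceding Equation~\eqref{eq:expression}, closed subspaces of $L^2_a(\ball^m)$ spanned by the subsets $\{z^{\mathfrak{n}}:\mathfrak{n}\in\mathcal{B}_{\mathfrak{j}_J}^{\mathfrak{b}_J}\}$ and $\{z^{\mathfrak{n}}:\mathfrak{n}\in\mathcal{B}_{\mathfrak{j}_I}^{\mathfrak{b}_I}\}$, and crucially they carry the \emph{same} inner product — the one inherited from $L^2_a(\ball^m)$, for which these monomials are orthonormal. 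Since $\mathcal{B}_{\mathfrak{j}_I}^{\mathfrak{b}_I}\subseteq\mathcal{B}_{\mathfrak{j}_J}^{\mathfrak{b}_J}$, the map $\pi_{J,I}$ just zeroes out the basis coefficients indexed outside the smaller box, hence for $X=\sum_{\mathfrak{n}\in\mathcal{B}_{\mathfrak{j}_J}^{\mathfrak{b}_J}}X_{\mathfrak{n}}z^{\mathfrak{n}}$ we get
\[
\|\pi_{J,I}X\|^2=\sum_{\mathfrak{n}\in\mathcal{B}_{\mathfrak{j}_I}^{\mathfrak{b}_I}}|X_{\mathfrak{n}}|^2\le \sum_{\mathfrak{n}\in\mathcal{B}_{\mathfrak{j}_J}^{\mathfrak{b}_J}}|X_{\mathfrak{n}}|^2=\|X\|^2,
\]
so $\pi_{J,I}$ is a contraction. (Equivalently: $\pi_{J,I}=P^{\mathfrak{b}_I}_{\mathfrak{j}_I}|_{\mathcal{H}_{\mathfrak{j}_J}^{\mathfrak{b}_J}}$ in the notation of Lemma~\ref{lem:bdd-rep}, a restriction of an orthogonal projection.) Then, reading off the formula for $\Psi_q$, the $I$-component of $\Psi_q(X)$ is $\sum_{i=1}^{q+1}(-1)^{i-1}\pi_{f^i_{q+1}(I),I}\big(X^{f^i_{q+1}(I)}\big)$, so the triangle inequality gives $\|(\Psi_q X)^I\|\le\sum_{i=1}^{q+1}\|X^{f^i_{q+1}(I)}\|$, and summing the squares over $I\in S_{q+1}(k)$ against $\sum_{J\in S_q(k)}\|X^J\|^2=\|X\|^2$ (each $J$ appearing as $f^i_{q+1}(I)$ for at most $k-q$ choices of $I$, with $q+1$ terms in each inner sum) yields $\|\Psi_q\|\le (q+1)\sqrt{k-q}$, a crude but finite bound; any finite bound suffices.

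I do not expect a genuine obstacle here — the statement is essentially bookkeeping once one recognizes the building blocks are coordinate projections. The one thing to be careful about is the $q=0$ case, where the source is $L^2_a(\ball^m)$ itself rather than an $\mathcal{H}_{\mathfrak{j}_J}^{\mathfrak{b}_J}$; but this is handled identically, since $\mathcal{H}_{\mathfrak{j}_i}^{\mathfrak{b}_i}$ is by construction a closed subspace of $L^2_a(\ball^m)$ with the restricted inner product, and $\Psi_0$ sends $X\mapsto\sum_i P^{\mathfrak{b}_i}_{\mathfrak{j}_i}X$, a sum of $k$ orthogonal projections, hence bounded by $k$. The minor nuisance is purely notational: keeping the shuffle indices $S_q(k)$, the face maps $f^i_{q+1}$, and the boxes $\mathcal{B}_{\mathfrak{j}_I}^{\mathfrak{b}_I}$ straight while writing the estimate cleanly.
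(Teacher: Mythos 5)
Your proof is correct and takes essentially the same approach as the paper: the paper also reduces to the componentwise estimate, applies Cauchy--Schwarz to harvest the $(q+1)$ factor, uses the box containment $\mathcal{B}_{\mathfrak{j}_{I_{q+1}}}^{\mathfrak{b}_{I_{q+1}}}\subseteq\mathcal{B}_{\mathfrak{j}_{I_{q}}}^{\mathfrak{b}_{I_{q}}}$ (which you phrase, equivalently and a bit more conceptually, as the maps $\pi_{J,I}$ being restrictions of orthogonal projections and hence contractions), and finishes with the same $(k-q)$ multiplicity count, arriving at the same kind of $\sqrt{(q+1)(k-q)}$-style bound.
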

\begin{proof} We write $X\in \mathcal{A}_q$ as a sum
\[
X=\sum_{I_q \in S_q(k)} X^{I_q},\qquad X^{I_q}\in \mathcal{H}_{\mathfrak{j}_{I_q}} ^{\mathfrak{b}_{I_q}}.
\]
Then by the above definition,  $\Psi_q(X)=\sum_{I'_{q+1}} Y^{I'_{q+1}}$, where $Y^{I'_{q+1}}\in \mathcal{H}_{\mathfrak{j}_{I'_{q+1}}} ^{\mathfrak{b}_{I'_{q+1}}}$ is equal to 
\[
(Y^{I'_{q+1}})_{\mathfrak{n}}:=\left\{\begin{array}{ll}\sum_{i=1}^{q+1}(-1)^{i-1} (X^{f^i_{q+1}(I'_{q+1})})_{\mathfrak{n}},&\mathfrak{n}\in \mathcal{B}_{\mathfrak{j}_{I'_{q+1}}} ^{\mathfrak{b}_{I'_{q+1}}},\\
										0,&\text{otherwise}.
										\end{array}\right.
\]

The norm of $\Psi_q(X)$ is computed as 
\[
\begin{split}
||\Psi_q(X)||^2=\sum_{I'_{q+1}} ||Y^{I'_{q+1}}||^2&=\sum_{I'_{q+1}}\sum_{\mathfrak{n}\in \mathcal{B}_{\mathfrak{j}_{I'_{q+1}}} ^{\mathfrak{b}_{I'_{q+1}}}}|Y^{I'_{q+1}}_{\mathfrak{n}}|^2\\
&=\sum_{I'_{q+1}}\sum_{\mathfrak{n}\in \mathcal{B}_{\mathfrak{j}_{I'_{q+1}}} ^{\mathfrak{b}_{I'_{q+1}}}} |\sum_{i=1}^{q+1}(-1)^{i-1} (X^{f^i_{q+1}(I'_{q+1})})_{\mathfrak{n}}|^2\\
&\text{by the Cauchy-Schwartz inequality}\\
&\leq \sum_{I'_{q+1}}\sum_{\mathfrak{n}\in \mathcal{B}_{\mathfrak{j}_{I'_{q+1}}} ^{\mathfrak{b}_{I'_{q+1}}}} (q+1)|(X^{f^i_{q+1}(I'_{q+1})})_{\mathfrak{n}}|^2\\
\end{split}
\]
\[
\begin{split}
&\text{as }\mathcal{B}_{\mathfrak{j}_{I'_{q+1}}} ^{\mathfrak{b}_{I'_{q+1}}}\subseteq \mathcal{B}_{\mathfrak{j}_{I_{q}}} ^{\mathfrak{b}_{I_q}}\\
&\leq \sum_{I'_{q+1}}\sum_{\mathfrak{n}\in \mathcal{B}_{\mathfrak{j}_{I_{q}}} ^{\mathfrak{b}_{I_q}} } (q+1)|(X^{I_{q}})_{\mathfrak{n}}|^2\\
&\text{as every }I_{q}\ \text{is contained in at most }(k-q)\ \text{number of}\ I'_{q+1}\\
&\leq (k-q)(q+1) \sum_{I\in S_q(k)}\sum_{\mathfrak{n}\in \mathcal{B}_{\mathfrak{j}_{I_{q}}} ^{\mathfrak{b}_{I_q}} } |(X^{I_{q}})_{\mathfrak{n}}|^2\\
&=(k-q)(q+1)||X||^2. 
\end{split}
\]
\end{proof}

\begin{proposition}\label{prop:module-morphism} The map $\Psi_q: \mathcal{A}_q\to \mathcal{A}_{q+1}$ is an $A=\complex[z_1, \cdots, z_m]$-module morphism. 
\end{proposition}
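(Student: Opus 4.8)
The plan is to verify that $\Psi_q$ commutes with the action of each generator $z_p$, $p=1,\dots,m$; since the $z_p$ generate $A$ and $\Psi_q$ is linear (and bounded, by Proposition~\ref{prop:bounded}), this suffices. Fix $X=\sum_{I_q\in S_q(k)}X^{I_q}$ with $X^{I_q}\in\mathcal{H}_{\mathfrak{j}_{I_q}}^{\mathfrak{b}_{I_q}}$. I would compute both $\Psi_q\bigl(T_{z_p}^{\mathcal{A}_q}X\bigr)$ and $T_{z_p}^{\mathcal{A}_{q+1}}\bigl(\Psi_q X\bigr)$ coefficient-by-coefficient in the monomial basis of Equation~(\ref{eq:expression}), indexed by $\mathfrak{n}\in\mathbb{N}^m$, and check they agree on each component $\mathcal{H}_{\mathfrak{j}_{I_{q+1}}}^{\mathfrak{b}_{I_{q+1}}}$ of $\mathcal{A}_{q+1}$.

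The key point is that, on each box-Hilbert module $\mathcal{H}_{\mathfrak{j}}^{\mathfrak{b}}$, the operator $T_{z_p}^{\mathfrak{j},\mathfrak{b}}$ has the uniform description from Section~\ref{subsec:box-hilbert}: in the monomial basis $z^{\mathfrak{n}}$, multiplication by $z_p$ (up to the fixed normalizing scalar $\sqrt{\omega_0(\mathfrak{n})/\omega_0(\mathfrak{n}+e_p)}$, which depends only on $\mathfrak{n}$ and $p$, not on the box) sends the $\mathfrak{n}$-coefficient to the $(\mathfrak{n}+e_p)$-coefficient, and the result is truncated to the box — i.e. the $(\mathfrak{n}+e_p)$-coefficient of $T_{z_p}^{\mathfrak{j},\mathfrak{b}}X$ equals (that scalar times) the $\mathfrak{n}$-coefficient of $X$ when $\mathfrak{n}+e_p\in\mathcal{B}_{\mathfrak{j}}^{\mathfrak{b}}$, and is $0$ otherwise. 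The crucial structural fact I would invoke is that the nested inclusion $\mathcal{B}_{\mathfrak{j}_{I_{q+1}}}^{\mathfrak{b}_{I_{q+1}}}\subseteq\mathcal{B}_{\mathfrak{j}_{f^i_{q+1}(I_{q+1})}}^{\mathfrak{b}_{f^i_{q+1}(I_{q+1})}}$ holds (it is $\bigcap$ over a larger index set, cf.\ Lemma~\ref{prop:inter-box}), so that the two truncations — first restrict to the smaller box, then apply $z_p$-truncation, versus first apply $z_p$-truncation on the larger box, then restrict — coincide: for $\mathfrak{n}+e_p\in\mathcal{B}_{\mathfrak{j}_{I_{q+1}}}^{\mathfrak{b}_{I_{q+1}}}$ one automatically has $\mathfrak{n}+e_p$ in each larger box $\mathcal{B}_{\mathfrak{j}_{f^i_{q+1}(I_{q+1})}}^{\mathfrak{b}_{f^i_{q+1}(I_{q+1})}}$. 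Thus on the $(\mathfrak{n}+e_p)$-coefficient of the $I_{q+1}$-component, both $\Psi_q T_{z_p}$ and $T_{z_p}\Psi_q$ yield $\sqrt{\omega_0(\mathfrak{n})/\omega_0(\mathfrak{n}+e_p)}\cdot\sum_{i=1}^{q+1}(-1)^{i-1}(X^{f^i_{q+1}(I_{q+1})})_{\mathfrak{n}}$ when $\mathfrak{n}+e_p\in\mathcal{B}_{\mathfrak{j}_{I_{q+1}}}^{\mathfrak{b}_{I_{q+1}}}$, and $0$ otherwise; the alternating-sum combinatorics is untouched because it only recombines components and commutes with the scalar-multiplication-and-shift.

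I would organize the write-up by first recording the coefficient formula for $T_{z_p}^{\mathfrak{j},\mathfrak{b}}$ in the monomial basis (a one-line consequence of the definition in Section~\ref{subsec:box-hilbert} together with the basis normalization $\|z^{\mathfrak{n}}\|_{L^2_a}=\sqrt{\omega_0(\mathfrak{n})}$), then state the nested-box inclusion, then do the two-line comparison of coefficients. The main obstacle — really the only place where care is needed — is bookkeeping the normalizing scalars: one must be sure that the scalar attached to the shift $\mathfrak{n}\mapsto\mathfrak{n}+e_p$ is the \emph{same} for the ambient Bergman space and for every box module (which it is, since all box modules use the basis $z^{\mathfrak{n}}/\sqrt{\omega_0(\mathfrak{n})}$ inherited from $L^2_a(\ball^m)$, independently of $\mathfrak{j}$ and $\mathfrak{b}$), so that pulling it out past the truncations and the alternating sum is legitimate. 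Once that is observed, the identity $\Psi_q\circ T_{z_p}^{\mathcal{A}_q}=T_{z_p}^{\mathcal{A}_{q+1}}\circ\Psi_q$ is immediate, and linearity in the $z_p$'s upgrades this to $\Psi_q$ being an $A$-module morphism.
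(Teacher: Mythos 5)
Your proof is correct and takes essentially the same approach as the paper's: verify commutativity with each $T_{z_p}$ componentwise in the monomial basis, using that the normalizing weight $\sqrt{\omega_0(\mathfrak{n})/\omega_0(\mathfrak{n}+e_p)}$ is box-independent and that $\mathcal{B}_{\mathfrak{j}_{I_{q+1}}}^{\mathfrak{b}_{I_{q+1}}}\subseteq\mathcal{B}_{\mathfrak{j}_{f^i_{q+1}(I_{q+1})}}^{\mathfrak{b}_{f^i_{q+1}(I_{q+1})}}$. You are somewhat more explicit than the paper about why the two orders of truncation agree (the paper leaves this as a "directly check"), but the substance is identical.
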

\begin{proof}
For every $I\in S_q(k)$, for $X^I\in \mathcal{H}^{\mathfrak{b}_I} _{\mathfrak{j}_I}$, $\Psi_q(X^I)$ is a sum 
\[
\sum_{1\leq s\leq k, s\notin I} (-1)^{\text{sign}(I, s)} Y^{I\cup \{ s\}},
\]
where $Y^{I\cup \{s\}}\in \mathcal{H}^{\mathfrak{b}_{I\cup \{s\}}} _{\mathfrak{j}_{I\cup \{s\}}}$, and $s$ is the $\alpha$-th smallest number in $I\cup \{s\}$, and sign$(I,s)$=$\alpha-1$, and the function $Y^{I\cup \{s\}}$ has the following form, 
\[
Y^{I\cup \{s\}}_{\mathfrak{n}}=\left\{\begin{array}{ll} X^I_{\mathfrak{n}},& \mathfrak{n}\in \mathcal{B}^{\mathfrak{b}_{I\cup \{s\}}}_{\mathfrak{j}_{I\cup \{s\}}},\\ 
0,&\text{otherwise}.
\end{array}\right.
\]

If $p\in \{1, \cdots, m\}$, the $z_p$ action on $\mathcal{H}_{\mathfrak{j}_I} ^{\mathfrak{b}_I}$ is as follows, 
\[
T^{\mathfrak{j}_I, \mathfrak{b}_I}_{z_p}(X^I)_{n_1\cdots (n_p+1)\cdots n_m}=\left\{\begin{array}{ll}\sqrt{\frac{\omega_0(n_1\cdots (n_p+1)\cdots n_m)}{\omega_0(n_1\cdots n_m)}} X^I_{n_1\cdots n_p\cdots n_m},& p\notin \mathfrak{j}_I,\\
\sqrt{\frac{\omega_0(n_1\cdots (n_p+1)\cdots n_m)}{\omega_0(n_1\cdots n_m)}} X^I_{n_1\cdots n_p\cdots n_m},& p=j^s\in \mathfrak{j}_I,\ \text{and}\ n_p+1\leq b^s,\\
0,&\text{otherwise}.
\end{array}\right.
\]
From this, we observe that the operator $T^{\mathfrak{j}_I, \mathfrak{b}_I}_{z_p}$ preserves the component $\mathcal{H}_{\mathfrak{j}_I} ^{\mathfrak{b}_I}$. Similarly, the $z_p$ action on $\mathcal{H}^{\mathfrak{b}_{I\cup \{s\}}}_{\mathfrak{j}_{I\cup \{s\}}}$ is as follows,
\[
\begin{split}
&T^{\mathfrak{j}_{I\cup\{s\}}, \mathfrak{b}_{I\cup \{s\}}}_{z_p}(Y^{I\cup \{s\}})_{n_1\cdots (n_p+1)\cdots n_m}\\
=&\left\{\begin{array}{ll}\sqrt{\frac{\omega_0(n_1\cdots (n_p+1)\cdots n_m)}{\omega_0(n_1\cdots n_m)}} Y^{I\cup\{s\}}_{n_1\cdots n_p\cdots n_m},& p\notin \mathfrak{j}_I,\ \ p\ne s,\\
\sqrt{\frac{\omega_0(n_1\cdots (n_p+1)\cdots n_m)}{\omega_0(n_1\cdots n_m)}} Y^{I\cup\{s\}}_{n_1\cdots n_p\cdots n_m},& p=j^t\in \mathfrak{j}_{I\cup \{s\}},\  n_p+1\leq b^t,\\
0,&\text{otherwise}.
\end{array}\right.
\end{split}
\]

Using the above definition of $\Psi_q(X^I)$, we can directly check  that on each component $\mathcal{H}^{\mathfrak{b}_{I\cup \{s\}}}_{\mathfrak{j}_{I\cup \{s\}}}$, 
\[
\Big(\Psi_q\big(T^{\mathfrak{j}_I, \mathfrak{b}_I}_{z_p}(X^I)\big)\Big)^{I\cup \{s\}}= T^{\mathfrak{j}_{I\cup \{s\}}, \mathfrak{b}_{I\cup \{s\}}}_{z_p}\Big(\Psi_q\big( X^I\big)^{I\cup \{ s\}}\Big),
\]
which shows that $\Psi_q$ is compatible with the $A$-module structure. 
\end{proof} 

\begin{proposition}\label{prop:kernel} $\operatorname{Im}(\Psi_{q-1})\subseteq \ker(\Psi_q)$.
\end{proposition}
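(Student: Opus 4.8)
The plan is to reduce the claim $\operatorname{Im}(\Psi_{q-1})\subseteq\ker(\Psi_q)$ to the classical identity $\partial^2=0$ for the simplicial (Čech-type) boundary operator on the nerve of the cover $\{1,\dots,k\}$, since the $\Psi_q$ are, component by component, exactly the alternating-sum maps $\sum_i(-1)^{i-1}f^i_{q+1}$ restricted to intersections of boxes. Concretely, fix $X=\sum_{H\in S_{q-1}(k)}X^H\in\mathcal{A}_{q-1}$ and compute $\Psi_q(\Psi_{q-1}(X))$ component-wise: for each $L=(l^0<\cdots<l^{q+1})\in S_{q+1}(k)$ I must show $(\,\Psi_q(\Psi_{q-1}(X))\,)^L=0$ as an element of $\mathcal{H}_{\mathfrak{j}_L}^{\mathfrak{b}_L}$, equivalently $(\,\Psi_q\Psi_{q-1}(X)\,)^L_{\mathfrak{n}}=0$ for every $\mathfrak n\in\mathcal{B}_{\mathfrak{j}_L}^{\mathfrak{b}_L}$.

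The main step is the explicit coefficient chase. For a fixed index $\mathfrak n$, since $\mathfrak n\in\mathcal{B}_{\mathfrak{j}_L}^{\mathfrak{b}_L}=\bigcap_{i\in L}\mathcal{B}_{\mathfrak{j}_i}^{\mathfrak{b}_i}\subseteq\mathcal{B}_{\mathfrak{j}_{I}}^{\mathfrak{b}_{I}}$ for every $I\subseteq L$, all the truncation conditions in the definitions of $\Psi_{q-1}$ and $\Psi_q$ are automatically satisfied along the way, so the ``otherwise $=0$'' branches never interfere on this index. Hence
\[
(\Psi_q\Psi_{q-1}(X))^L_{\mathfrak n}=\sum_{i=0}^{q+1}\sum_{j=0}^{q}(-1)^i(-1)^j\,(X^{f^j_q(f^i_{q+1}(L))})_{\mathfrak n},
\]
where $f^i_{q+1}$ deletes the $i$-th smallest element and $f^j_q$ the $j$-th smallest of the result. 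The standard combinatorial fact is the simplicial relation $f^j_q\circ f^i_{q+1}=f^{i-1}_q\circ f^j_{q+1}$ whenever $j<i$ (deleting two elements in either order yields the same $(q-1)$-subset, with a controlled shift of indices). Splitting the double sum into the pieces $j<i$ and $j\ge i$, reindexing the second piece via this relation, one sees the two pieces cancel termwise, so the sum is $0$. Since $\mathfrak n\in\mathcal{B}_{\mathfrak{j}_L}^{\mathfrak{b}_L}$ and $L$ were arbitrary, $\Psi_q(\Psi_{q-1}(X))=0$.

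The only genuine obstacle is bookkeeping: one must verify that the sign $(-1)^{\mathrm{sign}(\cdot,\cdot)}$ conventions used in the definition of $\Psi_q$ (Section \ref{subsec:morphism}) match the ``delete the $i$-th smallest element, sign $(-1)^{i-1}$'' normalization, and that the compositions $f^j_q\circ f^i_{q+1}$ correctly track which original element of $L$ is being dropped — this is where an off-by-one in the shift of indices would break the cancellation. I would handle this by writing $L=\{l^0,\dots,l^{q+1}\}$ explicitly, labelling each composite term by the unordered pair $\{l^a,l^b\}$ of elements of $L$ it omits, and checking that the two terms associated to the same pair (one from $j<i$, one from $j\ge i$) carry opposite signs; this makes the cancellation transparent and sidesteps any need to manipulate the $\omega_0$-weights, which play no role here because $\Psi_q,\Psi_{q-1}$ act as the identity (with signs) on matching coordinates. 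Everything else — boundedness and $A$-linearity — has already been established in Propositions \ref{prop:bounded} and \ref{prop:module-morphism}, so this lemma completes the ``complex'' half of the exact-sequence statement, leaving only exactness (i.e. $\ker\Psi_q\subseteq\operatorname{Im}\Psi_{q-1}$) for the subsequent arguments.
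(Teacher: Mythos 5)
Your argument is correct and is essentially the paper's proof viewed from the dual perspective: the paper fixes a source component $X^I$ ($I\in S_{q-1}(k)$), pushes forward by adjoining two elements $s,t$, and cancels the two contributions to each $Z^{I\cup\{s,t\}}$ via sign identities, whereas you fix the target component $L\in S_{q+1}(k)$, pull back by deleting two elements, and cancel via the simplicial face-map relation $f^j_q\circ f^i_{q+1}=f^{i-1}_q\circ f^j_{q+1}$ ($j<i$) — the same $\partial^2=0$ mechanism. The one observation that genuinely matters, and which you make correctly, is that $\mathfrak{n}\in\mathcal{B}^{\mathfrak{b}_L}_{\mathfrak{j}_L}\subseteq\mathcal{B}^{\mathfrak{b}_{I'}}_{\mathfrak{j}_{I'}}$ for all $I'\subseteq L$, so the truncation branches never fire; do fix the off-by-one slips ($L$ should be $l^1<\cdots<l^{q+1}$, and $i$ ranges over $1,\dots,q+1$, $j$ over $1,\dots,q$ with signs $(-1)^{i-1}(-1)^{j-1}$ in the paper's $1$-based convention), though they do not affect the substance.
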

\begin{proof}

For every $I\in S_{q-1}(k)$ and any $X^I\in \mathcal{H}^{\mathfrak{b}_I}_{\mathfrak{j}_I}$, the image of $X^I$ under $\Psi_{q-1}$ is of the form 
\[
\sum_{1\leq s\leq k, s\notin I} (-1)^{\text{sign}(I, s)} Y^{I\cup \{ s\}},
\]
where $Y^{I\cup \{s\}}\in \mathcal{H}^{\mathfrak{b}_{I\cup \{s\}}} _{\mathfrak{j}_{I\cup \{s\}}}$, and $s$ is the $\alpha$-th smallest number in $I\cup \{s\}$, and sign$(I,s)$=$\alpha-1$, and the function $Y^{I\cup \{s\}}$ has the following form, 
\[
Y^{I\cup \{s\}}_{\mathfrak{n}}=\left\{\begin{array}{ll} X^I_{\mathfrak{n}},& \mathfrak{n}\in \mathcal{B}^{\mathfrak{b}_{I\cup \{s\}}}_{\mathfrak{j}_{I\cup \{s\}}},\\ 
0,&\text{otherwise}.
\end{array}\right.
\]

Similarly, the image of $Y^{I\cup \{s\}}$ under the map $\Psi_q$  is of the form 
\[
\sum_{1\leq t\leq k, t\notin I\cup \{s\}} (-1)^{\text{sign}(I\cup \{s\}, t)} Z^{I\cup \{ s,t\}},
\]
where $Z^{I\cup \{s,t\}}\in \mathcal{H}^{\mathfrak{b}_{I\cup \{s,t\}}} _{\mathfrak{j}_{I\cup \{s,t\}}}$, and $t$ is the $\beta$-th smallest number in $I\cup \{s, t\}$, and sign$(I\cup\{s\}, t)$=$\beta-1$, and the function $Z^{I\cup \{s,t\}}$ has the following form, 
\[
Z^{I\cup \{s,t\}}_{\mathfrak{n}}=\left\{\begin{array}{ll} Y^{I\cup \{s\}}_{\mathfrak{n}},& \mathfrak{n}\in \mathcal{B}^{\mathfrak{b}_{I\cup \{s,t\}}}_{\mathfrak{j}_{I\cup \{s,t\}}},\\ 
0,&\text{otherwise}.
\end{array}\right.
\]

Combining the above computation, we have the following expression for $\Psi_q(\Psi_{q-1}(X^I))$,
\[
\begin{split}
\Psi_q(\Psi_{q-1}(X^I))&=\sum_{1\leq s\leq k, s\notin I} (-1)^{\text{sign}(I, s)} \Psi_q(Y^{I\cup \{ s\}})\\
&=\sum_{1\leq s\leq k, s\notin I} (-1)^{\text{sign}(I, s)} \sum_{1\leq t\leq k, t\notin I\cup \{s\}} (-1)^{\text{sign}(I\cup \{s\}, t)} Z^{I\cup \{ s,t\}}\\
&=\sum_{1\leq s\ne t\leq k, s,t\notin I} (-1)^{\text{sign}(I, s)+\text{sign}(I\cup \{s\}, t)} Z^{I\cup \{ s,t\}}\\
&=\sum_{1\leq s<t\leq k, s,t\notin I} \Big((-1)^{\text{sign}(I, s)+\text{sign}(I\cup \{s\}, t)}+(-1)^{\text{sign}(I,t)+\text{sign}(I\cup \{t\}, s)} \Big)Z^{I\cup \{s,t\}}.
\end{split}
\]
When $s<t$, it is not hard to check the following equations
\[
\text{sign}(I, s)=\text{sign}(I\cup \{t\}, s),\qquad \text{sign}(I\cup \{s\}, t)=\text{sign}(I,t)+1.
\]
And we conclude that $\Psi_q(\Psi_{q-1}(X^I))=0$, and complete the proof of this proposition.
\end{proof}

\begin{proposition}\label{prop:exactness} $$\operatorname{Im}(\Psi_{q-1})\supseteq \ker(\Psi_q), q=1, \cdots, k.$$
\end{proposition}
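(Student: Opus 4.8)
The plan is to reduce the exactness statement to a combinatorial fact about each fixed multi-index $\mathfrak{n}\in\mathbb{N}^m$, and then recognize that fact as the exactness of a (reduced) simplicial chain complex. First I would observe that all the maps $\Psi_q$ are ``diagonal'' with respect to the orthonormal monomial basis $z^{\mathfrak{n}}$: for each fixed $\mathfrak{n}$, the $\mathfrak{n}$-coordinate of $\Psi_q(X)$ depends only on the $\mathfrak{n}$-coordinates of the components $X^{J}$, and moreover $(X^J)_{\mathfrak{n}}$ can be nonzero only when $\mathfrak{n}\in\mathcal{B}_{\mathfrak{j}_J}^{\mathfrak{b}_J}$, i.e. only when $\mathfrak{n}$ lies in \emph{all} the boxes indexed by $J$. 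So I would fix $\mathfrak{n}$ and set $S(\mathfrak{n}):=\{\,i\in\{1,\dots,k\}\mid \mathfrak{n}\in\mathcal{B}_{\mathfrak{j}_i}^{\mathfrak{b}_i}\,\}$. By Lemma \ref{prop:inter-box}, $\mathfrak{n}\in\mathcal{B}_{\mathfrak{j}_J}^{\mathfrak{b}_J}$ iff $J\subseteq S(\mathfrak{n})$, so the only surviving components at level $q$ are those $X^J$ with $J\in S_q(k)$, $J\subseteq S(\mathfrak{n})$.

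Next I would write down the $\mathfrak{n}$-slice of the complex explicitly. For $q\ge 1$ it is the vector space $C_q(\mathfrak{n}):=\bigoplus_{J\subseteq S(\mathfrak{n}),\,|J|=q}\complex$, with $\Psi_q$ acting on it by the alternating-sum formula $(Y^{I})_{\mathfrak{n}}=\sum_{i=1}^{q+1}(-1)^{i-1}(X^{f^i_{q+1}(I)})_{\mathfrak{n}}$ — that is, exactly the simplicial coboundary of the full simplex on vertex set $S(\mathfrak{n})$. At the bottom, $\mathcal{A}_0=L^2_a(\ball^m)$ contributes $C_0(\mathfrak{n})=\complex$ (the single coefficient $X_{\mathfrak{n}}$), and $\Psi_0$ sends $X_{\mathfrak{n}}$ to $(X_{\mathfrak{n}},\dots,X_{\mathfrak{n}})$ across the one-element subsets $\{i\}\subseteq S(\mathfrak{n})$; this is the augmentation map. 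Thus, for each fixed $\mathfrak{n}$, the slice of
\[
L^2_a(\ball^m)\stackrel{\Psi_0}{\longrightarrow}\mathcal{A}_1\stackrel{\Psi_1}{\longrightarrow}\cdots\stackrel{\Psi_{k-1}}{\longrightarrow}\mathcal{A}_k
\]
is precisely the augmented simplicial cochain complex of the simplex $\Delta^{S(\mathfrak{n})}$ on the nonempty vertex set $S(\mathfrak{n})$ (note $S(\mathfrak{n})\ne\varnothing$ always, since every $\mathfrak{n}$ lies in $\mathcal{B}_{\mathfrak{j}_i}^{\mathfrak{b}_i}$ for at least\dots — actually one must be a little careful here: if some $\mathcal{B}_{\mathfrak{j}_i}^{\mathfrak{b}_i}$ misses $\mathfrak{n}$ for all $i$ the slice is just $\complex\to 0$, which is \emph{not} exact at $C_0$; but that is fine, because Proposition \ref{prop:exactness} only asserts exactness for $q=1,\dots,k$, i.e. \emph{not} at $\mathcal{A}_0$). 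A nonempty simplex is contractible, so its reduced cohomology vanishes in all degrees $\ge 1$ while, crucially for the other half of the theorem, $H^0$ of the \emph{unaugmented} complex is one-dimensional — but here we only need the vanishing: $\ker\Psi_q=\operatorname{Im}\Psi_{q-1}$ on the $\mathfrak{n}$-slice for every $q\ge 1$.

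Finally I would assemble the slices back together. Given $X\in\ker(\Psi_q)\subseteq\mathcal{A}_q$, for each $\mathfrak{n}$ the slice argument produces a preimage $W(\mathfrak{n})\in C_{q-1}(\mathfrak{n})$ with coboundary equal to the $\mathfrak{n}$-slice of $X$; I then define $W\in\mathcal{A}_{q-1}$ component-wise by $(W^J)_{\mathfrak{n}}:=W(\mathfrak{n})_J$. The only real points to verify are (i) that $W$ indeed lies in $\mathcal{A}_{q-1}$, i.e. is square-summable — this follows because on each simplex one can choose the explicit cone contraction (fix the smallest vertex $v_0$ of $S(\mathfrak{n})$ and set $W(\mathfrak{n})_J=\pm X(\mathfrak{n})_{J\cup\{v_0\}}$ when $v_0\notin J$, $0$ otherwise), which makes $|W(\mathfrak{n})_J|\le|X_{\mathfrak{n}}^{J\cup\{v_0\}}|$ entrywise, so $\|W\|^2\le\|X\|^2$; and (ii) that $\Psi_{q-1}(W)=X$, which holds slice-by-slice by construction. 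I expect the main obstacle to be precisely this bookkeeping in step (iii)/(i): making the contracting homotopy \emph{uniform in $\mathfrak{n}$} (so that the resulting $W$ is an honest element of the Hilbert module, not merely a formal series) and checking that the smallest-vertex cone really is a chain homotopy whose $\mathfrak{n}$-dependence is measurable and bounded. Once the explicit homotopy $h_q:\mathcal{A}_{q}\to\mathcal{A}_{q-1}$, $(h_qX)^J_{\mathfrak{n}}=(X^{J\cup\{v_0(\mathfrak{n})\}})_{\mathfrak{n}}$ (with the appropriate sign, $v_0(\mathfrak{n})$ the least element of $S(\mathfrak{n})$) is written down, the identity $\Psi_{q-1}h_q+h_{q+1}\Psi_q=\mathrm{id}$ on $\mathcal{A}_q$ for $q\ge 1$ is the standard simplicial computation and finishes the proof.
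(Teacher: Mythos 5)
Your approach is correct, but it is genuinely different from the paper's. The paper proves exactness by induction on the number of boxes $k$: for $k=p$ it decomposes the complex into two auxiliary complexes built from fewer boxes (one from the first $p-1$ boxes, one from their intersections with the $p$-th), writes $\mathcal{A}_q=\mathcal{A}^1_q\oplus\mathcal{A}^2_{q-1}$ with $\Psi_q$ in block-triangular form, and runs a diagram chase using the inductive hypothesis; this takes several pages of case analysis on $q$. You instead decompose the complex coordinate-wise along the multi-index $\mathfrak{n}$, identify each slice with the augmented simplicial cochain complex of the full simplex on the vertex set $S(\mathfrak{n})=\{i:\mathfrak{n}\in\mathcal{B}^{\mathfrak{b}_i}_{\mathfrak{j}_i}\}$, and write down the cone contraction $(h_qX)^J_{\mathfrak{n}}=X^{J\cup\{v_0(\mathfrak{n})\}}_{\mathfrak{n}}$ (with $v_0(\mathfrak{n})$ the least element of $S(\mathfrak{n})$, $0$ if $v_0\in J$). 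This is cleaner and gives strictly more: not only exactness for $q\geq 1$, but a bounded chain homotopy with $\|h_q\|\leq 1$ (for each $\mathfrak{n}$ the map $J\mapsto J\cup\{v_0(\mathfrak{n})\}$ is injective into the $q$-subsets, so $\|h_qX\|^2\leq\|X\|^2$). The "obstacle" you flag — uniformity in $\mathfrak{n}$ — is actually a non-issue: because the sign attached to $v_0(\mathfrak{n})$ is always $(-1)^{1-1}=+1$ (it is the least element of $S(\mathfrak{n})\supseteq J$, hence of $J\cup\{v_0\}$), the formula is literally coordinate selection, with no unbounded coefficients, and the homotopy identity $\Psi_{q-1}h_q+h_{q+1}\Psi_q=\mathrm{id}$ on $\mathcal{A}_q$ ($q\geq 1$) follows by the standard two-case check ($v_0\in J$ vs.\ $v_0\notin J$). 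Your remark that slices with $S(\mathfrak{n})=\varnothing$ fail exactness only in degree $0$, which is outside the claimed range, is exactly the right observation. In short: the paper inducts on boxes; you exploit the underlying \v{C}ech/nerve structure directly, which makes the exactness transparent and the norm estimates automatic.
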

\begin{proof}
We prove the proposition by induction on the number $k$.  

For $k=1$, we consider the map $\Psi_0: \mathcal{A}_0\to \mathcal{A}_1$. With the orthonormal basis, it is not hard to observe that $\mathcal{A}_1$ can be identified with a closed subspace of $\mathcal{A}_0=L^2_a(\ball^m)$, and the map $\Psi_0$ is the corresponding orthogonal projection map. Therefore, $\Psi_0$ is a surjective map. 

Suppose that the following is true
\[
\operatorname{Im}(\Psi_{q-1})\supseteq \ker(\Psi_q), q=1, \cdots, k,
\]
for all $1\leq k<p$.

We look at the case that $k=p$, and separate the proof into two different cases. 

\noindent{\bf Case I: $q=1$}. Consider $X=(X^1, \cdots, X^p)\in \mathcal{A}_1$ such that $X\in \ker{\Psi_1}$. Define a function $\xi\in \mathcal{A}_0=L^2_a(\ball^m)$ as follows.
\[
\xi_{\mathfrak{n}}:=\left\{ \begin{array}{ll} X^s_\mathfrak{n},& \text{there is}\ s\ \text{such that}\ \mathfrak{n}\in \mathcal{B}^{\mathfrak{b}_s} _{\mathfrak{j}_s}, \\ 0,& \text{otherwise}.\end{array}\right.
\]
We observe that if there are $s,t$ such that $\mathfrak{n}$ belongs to both $ \mathcal{B}^{\mathfrak{b}_s} _{\mathfrak{j}_s}$ and $ \mathcal{B}^{\mathfrak{b}_t} _{\mathfrak{j}_t}$, then the $\mathcal{H}^{\mathfrak{b}_{st}}_{\mathfrak{j}_{st}}$ component of $\Psi_1(X)$ is 
\[
X^s_{\mathfrak{n}}-X^t_{\mathfrak{n}}=0,
\]
as $X\in \ker(\Psi_1)$. Hence, the value $\xi_{\mathfrak{n}}$ is independent of the choices of $s$. And therefore $\xi$ is well defined. Furthermore,  $||\xi||^2$ is no more than the sum 
\begin{equation}\label{eq:norm}
||X^1||^2+\cdots +||X^p||^2. 
\end{equation}
Hence, $\xi\in \mathcal{A}_0$ and $\Psi_0(\xi)=X\in \ker(\Psi_1)$.

\noindent{\bf Case II: $2\leq q\leq p-1$.} We consider the following two collections of $p-1$ boxes, 
\begin{enumerate}
\item the first $p-1$ boxes 
\[
\{\mathcal{B}_{\mathfrak{j}_1}^{\mathfrak{b}_1}, \cdots, \mathcal{B}_{\mathfrak{j}_{p-1}}^{\mathfrak{b}_{p-1}}\}.
\]
We follow the construction in Section \ref{subsec:hilbert-mod}-\ref{subsec:morphism} and consider the associated $A$-modules $\mathcal{A}^1_s$ together with the $A$-module morphisms  $\Psi^1_s: \mathcal{A}^1_s\to \mathcal{A}^1_{s+1}$, $s=1,\cdots, p-2$. Set $\mathcal{A}^1_p:=\{0\}$, and $\Psi^1_{p-1}=0$.  
\item the intersections of the first $p-1$ boxes with the last one $\mathcal{B}_{\mathfrak{j}_p}^{\mathfrak{b}_p}$,
\[
\{ \mathcal{B}_{\mathfrak{j}_{1p}}^{\mathfrak{b}_{1p}}, \cdots, \mathcal{B}_{\mathfrak{j}_{p-1 p}}^{\mathfrak{b}_{p-1 p}}\}.
\]
We follow the construction in Section \ref{subsec:hilbert-mod}-\ref{subsec:morphism}  and consider the associated $A$-modules $\mathcal{A}^2_s$ together with the $A$-module morphisms $\Psi^2_s: \mathcal{A}^2_{s}\to \mathcal{A}^2_{s+1}$, $s=1, \cdots, p-2$. Set $\mathcal{A}^2_p:=\{0\}$, and $\Psi^2_{p-1}=0$. 
\end{enumerate}
By the induction assumption, we know that 
\[
\operatorname{Im}(\Psi^1_{q-1})\supseteq \ker(\Psi^1_q),\qquad \operatorname{Im}(\Psi^2_{q-1})\supseteq \ker(\Psi^2_q),\qquad q=1, \cdots, p-1.
\]

We define a map $\Phi_t: \mathcal{A}^1_{t}\to \mathcal{A}^2_{t}$ by 
\[
\Phi_t(X^I)=Y^{I\cup \{p\}},\ I\in S_t(p-1)
\]
where by $Y^{I\cup \{p\}}$ we refer to the component corresponding to the intersection of the boxes $\mathcal{B}_{\mathfrak{j}_{i_1p}}^{\mathfrak{b}_{i_1p}}, \cdots, \mathcal{B}_{\mathfrak{j}_{i_{t}p}}^{\mathfrak{b}_{i_tp}}$ with
\[
Y^{I\cup \{p\}}_{\mathfrak{n}}:=\left\{\begin{array}{ll} (-1)^{t}X^I_{\mathfrak{n}},& \mathfrak{n}\in \mathcal{B}_{\mathfrak{j}_{I\cup\{p\}}} ^{\mathfrak{b}_{I\cup \{p\}}}\\ 0,&\text{otherwise}.\end{array}\right.
\]
Similar to Proposition \ref{prop:module-morphism}, $\Phi_t$ is an $A$-module morphism. We leave the detail to the reader to check. 

With the above construction, we can easily check the following identities. 
\begin{enumerate}
\item $\mathcal{A}_q=\mathcal{A}^1_{q}\oplus \mathcal{A}^2_{q-1}$, for $q=2,\cdots, p$, where $\mathcal{A}^1_p=\{0\}$. 
\item $\Psi_q=\left(\begin{array}{cc} \Psi^1_q&0\\ \Phi_q& \Psi^2_{q-1}\end{array}\right)$, for $q=2, \cdots, p-1$, where $\Psi^1_{p-1}=0$.  
\end{enumerate}

We use the above identifications to prove $\text{Im}(\Psi_{q-1})\supseteq \ker(\Psi_{q})$. The proof consists of the following three cases. 
\[
i)\ q=2,\qquad ii)\ 3\leq q\leq p-2,\qquad iii)\ q=p-1. 
\] 

\noindent{$i)\ q=2$}. 

Suppose $(X_1, X_2)\in \mathcal{A}^1_2\oplus \mathcal{A}^2_1=\mathcal{A}_2$ is in the kernel of the morphism $\Psi_2$. By the above identification of $\Psi_q$, we have 
\[
\Psi^1_2(X_1)=0,\qquad \Phi_2(X_1)+\Psi^2_1(X_2)=0. 
\]
By the induction assumption, $\ker(\Psi^1_2)\subseteq \text{Im}(\Psi^1_1)$. So there exists $Y_1\in \mathcal{A}^1_1$ such that $\Psi^1_1(Y_1)=X_1$.  By Proposition \ref{prop:kernel} for the morphism $\Psi_\bullet$, we have
\[
\begin{split}
(0,0)=\Psi_2\big(\Psi_1(Y_1,0)\big)&=\Psi_2\big(\Psi^1_1(Y_1), \Phi_1(Y_1)\big)=\Big(\Psi^1_2\big(\Psi^1_1(Y_1)\big), \Phi_2\big(\Psi^1_1(Y_1)\big)+\Psi^2_1\big(\Phi_1(Y_1)\big)\Big)\\
&\text{as}\ \Psi^1_1(Y_1)=X_1, \Psi^1_2\big(\Psi^1_1(Y_1)\big)=0\\
&=\Big(0, \Phi_2(X_1)+\Psi^2_1\big(\Phi_1(Y_1)\big) \Big).
\end{split}
\]
Hence, $\Phi_2(X_1)+\Psi^2_1\big(\Phi_1(Y_1)\big)=0$. Consider $X_2'=X_2-\Phi_1(Y_1)$.  We compute 
\[
\Psi^2_1(X_2')=\Psi^2_1(X_2-\Phi_1(Y_1))=\Psi^2_1(X_2)-\Psi^2_1(\Phi_1(Y_1))=\Psi^2_1(X_2)+\Phi_2(X_1)=0,
\] 
as $\Psi_2(X_1, X_2)=(\Psi^1_2(X_1), \Phi_2(X_1)+\Psi^2_1(X_2))=0$. 

Using the property that $\Psi^2_1(X_2')=0$, we construct an element $Y_2\in \mathcal{H}_{\mathfrak{j}_p}^{\mathfrak{b}_p}$ by setting
\[
(Y_2)_{\mathfrak{n}}:=\left\{\begin{array}{ll}({X'_2}^{ip})_{\mathfrak{n}},&\mathfrak{n}\in \mathcal{B}^{\mathfrak{b}_{ip}}_{\mathfrak{j}_{ip}},\ \text{for some }\ i=1,..., p-1,\\ 0,&\text{otherwise}. \end{array}\right.
\]
As $\Psi^2_1(X_2')=0$, the above definition of $Y_2$ is independent of the choices of $i$. It is not hard to check the norm of $Y_2$ is bounded following the similar estimate as Equation (\ref{eq:norm}). Therefore, $Y_2$ is in $\mathcal{H}_{\mathfrak{j}_p}^{\mathfrak{b}_p}\subset L^2_a(\ball^m)$, and $\Psi^2_0(Y_2)=X_2'$. 

In summary, we have constructed an element $(Y_1, Y_2)\in \mathcal{A}_1=\mathcal{A}^1_1\oplus\mathcal{H}_{\mathfrak{j}_p}^{\mathfrak{b}_p}$. And it is not hard to check that 
\[
\Psi_1(Y_1, Y_2)=(\Psi^1_1(Y_1), \Phi_1(Y_1)+\Psi_0^2(Y_2))=(X_1, \Phi_1(Y_1)+X_2')=(X_1, X_2), 
\]
which shows that $(X_1, X_2)\in \text{Im}(\Psi_1).$

\noindent{ii) $3\leq q\leq p-2$}. 

Suppose $(X_1, X_2)\in \mathcal{A}^1_{q}\oplus \mathcal{A}^2_{q-1}=\mathcal{A}_q$ is in the kernel of the morphism $\Psi_q$.  By the above identification of $\Psi_q$, we have 
\[
\Psi^1_q(X_1)=0,\qquad \Phi_q(X_1)+\Psi^2_{q-1}(X_2)=0. 
\]
As $\operatorname{Im}(\Psi^1_{q-1})\supseteq \ker(\Psi^1_q)$, there is $Y_1\in \mathcal{A}^1_{q-1}$ such that $X_1=\Psi^1_{q-1}(Y_1)$. As $\Psi_q(\Psi_{q-1}(Y_1, 0))=0$, 
$\Phi_q(X_1)+\Psi^2_{q-1}(\Phi_{q-1}(Y_1))=0$. Hence, we have
\[
\Psi^2_{q-1}(X_2-\Phi_{q-1}(Y_1))=0. 
\]
As $\operatorname{Im}(\Psi^2_{q-2})\supseteq \ker(\Psi^2_{q-1})$, there exists $Y_2\in \mathcal{A}^2_{q-2}$ such that 
\[
\Psi^2_{q-2}(Y_2)=X_2-\Phi_{q-1}(Y_1). 
\]
Therefore, we have found $(Y_1, Y_2)\in \mathcal{A}_q$ such that 
\[
\Psi_{q-1}(Y_1, Y_2)=\big(\Psi^1_{q-1}(Y_1), \Phi_{q-1}(Y_1)+\Psi_{q-2}(Y_2)\big)=(X_1, X_2). 
\]

\noindent{$iii)\ q=p-1$.}

We notice that $\mathcal{A}_{p}$ is the same as $\mathcal{A}^2_{p-1}$. As the map $\Psi^2_{p-3}: \mathcal{A}^2_{p-2}\to \mathcal{A}^2_{p-1}$ is surjective, it follows that the map $\Psi_{p-1}: \mathcal{A}_{p-1}=\mathcal{A}^1_{p-1}\oplus \mathcal{A}^2_{p-2}\to \mathcal{A}_{p}=\mathcal{A}^2_{p-1}$ is surjective. 

\end{proof}
\subsection{Proof of Theorem \ref{thm:main-intro}}\label{subsec:proof}

In this subsection, we prove the main theorem of the paper. We assume that $I$ is an ideal of the polynomial algebra $\complex[z_1, \cdots, z_m]$ generated by monomials $z^{\alpha_i}:=z_1^{\alpha^1_i}\cdots z_{m}^{\alpha^m_i}$, for $\alpha_i\in \mathbb{N}^m$, $i=1,\cdots, l$. 

Following \cite[Theorem 1.1.2]{he-hi:monomial}, monomials inside the ideal $I$ form a linear basis of the ideal $I$ over $\complex$. We consider the lattice $\mathbb{N}^m$ and the subset   $C(I)$ consisting of exponents of monomials that do not belong to $I$. According to \cite[Proposition 1.1.5]{he-hi:monomial}, a monomial $v$ belongs to $I$ if and only if there is a monomial $w$ such that $v=wz^{\alpha_i}$ for some $i=1, \cdots, l$.  Therefore, the monomial $z_1^{n^1}\cdots z_m^{n^m}$ does not belong to $I$ if and only for any $i=1,\cdots, l$, $z^{\alpha_i}$ is not a factor of $z_1^{n^1}\cdots z_m^{n^m}$. Equivalently, $z_1^{n^1}\cdots z_m^{n^m}$ does not belong to $I$ if and only if for every $i$, there is $s_i$ such that $n^{s_i}< \alpha_i^{s_i}$. 

Consider the finite collection $S(\alpha_1, \cdots, \alpha_l)$ of $l$-tuple of natural numbers $\mathfrak{s}=(s_1, \cdots, s_l)$ such that $1\leq s_i\leq m$. For each $\mathfrak{s}$, let $\mathfrak{j}_{\mathfrak{s}}\subseteq \{1,\cdots,m\}$ be the subset consisting of those numbers appearing in the array $(s_1, \cdots, s_l)$. For every $k\in \mathfrak{j}_{\mathfrak{s}}$, let $b_k$ be the minimum of all $\alpha_i^{s_i}$ such that $s_i=k$ for $i=1, \cdots, l$.  From the above conditions on monomials not in $I$, we conclude that $C(I)$ is the union of all  boxes $\mathcal{B}_{\mathfrak{s}}$. We refer the reader to \cite[Section 9.2, Theorem 3]{co-li-osh:ideals} for a related discussion. We conclude from the above discussion that a polynomial $f$ belongs to $I$ if and only if $f$ has no nonzero component in any of the boxes $\mathcal{B}_{\mathfrak{j}_{\mathfrak{s}}}^{\mathfrak{b}_{\mathfrak{s}}}$ for any $\mathfrak{s}\in S(\alpha_1, \cdots, \alpha_l)$. Let $\mathcal{B}_{\mathfrak{j}_1}^{\mathfrak{b}_1},\cdots, \mathcal{B}_{\mathfrak{j}_k}^{\mathfrak{b}_k}$ be the collection of nonempty boxes in 
$\{\mathcal{B}_{\mathfrak{j}_{\mathfrak{s}}}^{\mathfrak{b}_{\mathfrak{s}}}: \mathfrak{s}\in S(\alpha_1, \cdots, \alpha_l)\}$ associated to the ideal $I$. 

Associated to the above collection of boxes $\mathcal{B}_{\mathfrak{j}_1}^{\mathfrak{b}_1},\cdots, \mathcal{B}_{\mathfrak{j}_k}^{\mathfrak{b}_k}$, we apply the constructions in Section \ref{subsec:hilbert-mod}-\ref{subsec:morphism} to construct a sequence of Hilbert $A$-modules $\mathcal{A}_0, \cdots, \mathcal{A}_k$ together with module morphisms $\Psi_q: \mathcal{A}_q \to \mathcal{A}_{q+1}$, i.e. 
\[
\mathcal{A}_0=L^2_a(\ball^m)\stackrel{\Psi_0}{\longrightarrow} \mathcal{A}_1\stackrel{\Psi_1}{\longrightarrow}\cdots \stackrel{\Psi_{k-1}}{\longrightarrow }\mathcal{A}_k\longrightarrow 0.
\]
Proposition \ref{prop:ess-norm-box} shows that each $\mathcal{A}_q$ ($q=0, 1, ..., k$) is a Hilbert $A=\complex[z_1, \cdots, z_m]$-module. Proposition \ref{prop:bounded}-\ref{prop:module-morphism} show that $\Psi_q$ ($q=0,\cdots, k-1$) is a bounded module morphism. Proposition \ref{prop:kernel}-\ref{prop:exactness} show that the above sequence is exact at $\mathcal{A}_q$ for $q=1, \cdots, k$. 

We are left to prove that the kernel of the morphism $\Psi_0$ is the completion of $I$ in $L^2_a(\ball^m)$, i.e.
\[
\overline{I}=\ker(\Psi_0). 
\]
By the above discussion, if $f\in I$, then $f$ has no nonzero component in any of the boxes $\mathcal{B}_{\mathfrak{j}_{\mathfrak{s}}}^{\mathfrak{b}_{\mathfrak{s}}}$ for any $\mathfrak{s}\in S(\alpha_1, \cdots, \alpha_l)$. This shows that $f\in \ker(\Psi_0)$. Therefore, $I$ and its closure $\overline{I}$ are contained inside $\ker(\Psi_0)$. 

Suppose that $f$ is in the kernel $\ker(\Psi_0)$. Write $f$ in terms of the orthonormal basis, 
\[
f=\sum_{\mathfrak{n}\in \mathbb{N}^m} f_{\mathfrak{n}}z^{\mathfrak{n}}.
\]
As $\Psi_0(f)=0$, by the definition of $\Psi_0$, for any $s=1, \cdots, k$, and any $\mathfrak{n}\in \mathcal{B}_{\mathfrak{j}_s}^{\mathfrak{b}_s}$, $f_{\mathfrak{n}}=0$. For any positive integer $M$, let $f_M$ be the truncation of the above expansion of $f$ by requiring $n^1, \cdots, n^m<M$, i.e. 
\[
f_M:=\sum_{\mathfrak{n}\in \mathbb{N}^m, n^1<M, \cdots, n^m<M} f_{\mathfrak{n}}z^{\mathfrak{n}}.
\]
It is not hard to see that $f_M$ is a polynomial, and has no component in the  boxes $\mathcal{B}_{\mathfrak{j}_1}^{\mathfrak{b}_1},\cdots, \mathcal{B}_{\mathfrak{j}_k}^{\mathfrak{b}_k}$. By the construction of the boxes $\mathcal{B}_{\mathfrak{j}_1}^{\mathfrak{b}_1},\cdots, \mathcal{B}_{\mathfrak{j}_k}^{\mathfrak{b}_k}$,  $f_M$ belongs to the ideal $I$. As $M\to \infty$, $f_M$ converges to $f$ in $L^2_a(\ball^m)$. Hence, we have shown that $f$ belongs to the closure $\overline{I}$, and $\ker(\Psi_0)$ is a subset of $\overline{I}$. So we conclude that $\overline{I}=\ker(\Psi_0)$. 

In summary, we have completed the proof of Theorem \ref{thm:main-intro} for general monomial ideals. 

\subsection{K-homology class}\label{subsec:k-hom} 

As a corollary of Theorem \ref{thm:main-intro}, by applying \cite[Theorem 1]{do:module}, we can conclude that the closure $\overline{I}$ of the ideal $I$ and the quotient $Q_I:=L^2_a(\mathbb{B}^m)/\overline{I}$ are both essentially normal Hilbert modules. 

Let $\mathfrak{T}(\overline{I})$ (and $\mathfrak{T}(Q_{I})$) be  the unital $C^*$-algebra generated by Teoplitz operators on the module $\overline{I}$ (and the quotient module $Q_I$). We would like to discuss properties of the $K$-homology class associated to the following Toeplitz extension,
\[
0\longrightarrow \mathcal{K}\longrightarrow \mathfrak{T}(Q_I)\longrightarrow C(\sigma^e_I)\longrightarrow 0,
\]
where $\sigma^e_I$ is the essential spectrum space of the algebra $\mathfrak{T}(Q_I)$ and $\mathcal{K}$ is the algebra of compact operators.

By Theorem \ref{thm:main-intro}, for $i=1, \cdots, k$, we introduce the following closed subspace of $\mathcal{A}_i$,
\[
\mathcal{A}_i^{-}:=\operatorname{Im}(\Psi_{i-1})=\operatorname{ker}(\Psi_i). 
\]
As $\Psi_{k-1}$ is surjective, $\mathcal{A}_k^-=\mathcal{A}_k$. 

As $\Psi_i: \mathcal{A}_i\to \mathcal{A}_{i+1}$ is a morphism of $A=\complex[z_1, \cdots, z_m]$-modules, the kernel $\mathcal{A}_i^{-}=\operatorname{ker}(\Psi_i)$ is naturally an $A$-module. Furthermore, we have the following exact sequence of Hilbert $A$-modules,
\[
0\longrightarrow \mathcal{A}_{i}^-\longrightarrow \mathcal{A}_i\longrightarrow \mathcal{A}_{i+1}^{-}\longrightarrow 0,\ i=1, \cdots, k-1, 
\]
where the first map is the inclusion, and the second map is $\Psi_i$. 
\begin{lemma}\label{lem:sub-module}  $\forall i=1, \cdots, k$, the $A$-module $\mathcal{A}_i^{-}$ is essentially normal, and therefore the quotient module 
$Q_i:=\mathcal{A}_i/\mathcal{A}_{i}^-$ is also essentially normal. 
\end{lemma}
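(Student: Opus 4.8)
The plan is to run a downward induction on $i$, starting from $i=k$ and decreasing, using the short exact sequences
\[
0\longrightarrow \mathcal{A}_{i}^{-}\longrightarrow \mathcal{A}_i\stackrel{\Psi_i}{\longrightarrow} \mathcal{A}_{i+1}^{-}\longrightarrow 0,\qquad i=1,\dots,k-1,
\]
together with the base case $\mathcal{A}_k^{-}=\mathcal{A}_k$, which is essentially normal by Proposition \ref{prop:ess-norm-box} (each $\mathcal{A}_q$ is a finite direct sum of the modules $\mathcal{H}_{\mathfrak{j}_I}^{\mathfrak{b}_I}$). The key input is the Douglas--Sarkar / hereditary-type principle from \cite{do:module}: given a short exact sequence of Hilbert $A$-modules in which the middle term and one outer term are essentially normal, the third term is essentially normal as well. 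I would invoke this in the form stated as \cite[Theorem 1]{do:module} (already used in Section \ref{subsec:k-hom} to deduce essential normality of $\overline I$ and $Q_I$).

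The induction step goes as follows. Assume $\mathcal{A}_{i+1}^{-}$ is essentially normal. In the sequence above, the middle module $\mathcal{A}_i$ is essentially normal (Proposition \ref{prop:ess-norm-box}), $\mathcal{A}_{i+1}^{-}$ is the quotient and is essentially normal by hypothesis, and the maps $\Psi_i$ are bounded $A$-module morphisms (Propositions \ref{prop:bounded} and \ref{prop:module-morphism}). Applying \cite[Theorem 1]{do:module} to this sequence, one concludes that the submodule $\mathcal{A}_i^{-}=\ker(\Psi_i)$ is essentially normal. This completes the induction and shows $\mathcal{A}_i^{-}$ is essentially normal for all $i=1,\dots,k$. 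Finally, since $Q_i=\mathcal{A}_i/\mathcal{A}_i^{-}$ fits into the short exact sequence $0\to\mathcal{A}_i^{-}\to\mathcal{A}_i\to Q_i\to 0$ with $\mathcal{A}_i$ and $\mathcal{A}_i^{-}$ essentially normal, the same theorem gives essential normality of $Q_i$; alternatively, for $i\le k-1$ one observes directly that $Q_i\cong\mathcal{A}_{i+1}^{-}$ via the isomorphism induced by $\Psi_i$ (exactness, Propositions \ref{prop:kernel}--\ref{prop:exactness}), which is already known to be essentially normal.

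The main obstacle is purely a matter of citing \cite{do:module} in the correct generality: the cleanest statements there concern the passage between a Hilbert module, a closed submodule, and the quotient, and one must be careful that ``essentially normal'' is preserved in both the sub- and quotient directions under a bounded module map that need not be a partial isometry. This is why I would prefer to phrase the argument so that at each stage I only pass to a \emph{submodule} of an essentially normal module (using $\mathcal{A}_i^- = \ker \Psi_i \subseteq \mathcal{A}_i$ and then $Q_i \cong \mathcal{A}_{i+1}^-$), since the statement that a closed submodule of an essentially normal module is essentially normal is the most robust form of the result. A secondary, entirely routine point is to note that the commutators in question live in the fixed ambient algebra of operators on $L^2_a(\ball^m)$ (via the projections $P_{\mathfrak j}^{\mathfrak b}$), so ``compact'' has an unambiguous meaning compatible across the direct-sum decompositions.
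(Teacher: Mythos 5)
Your proof is correct and follows essentially the same downward-induction argument the paper uses: start from the base case $\mathcal{A}_k^{-}=\mathcal{A}_k$ (coming from surjectivity of $\Psi_{k-1}$) and repeatedly apply \cite[Theorem 1]{do:module} to the short exact sequences $0\to\mathcal{A}_i^{-}\to\mathcal{A}_i\to\mathcal{A}_{i+1}^{-}\to 0$. The extra remark that $Q_i\cong\mathcal{A}_{i+1}^{-}$ for $i\le k-1$ is a harmless elaboration the paper leaves implicit.
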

\begin{proof}
When $i=k-1$, as $\Psi_{k-1}$ is surjective, we have the following short exact sequence, 
\[
0\longrightarrow \mathcal{A}_{k-1}^-\longrightarrow \mathcal{A}_{k-1}\longrightarrow \mathcal{A}_{k}\longrightarrow 0. 
\]

As is explained in Section \ref{subsec:hilbert-mod}, both $\mathcal{A}_{k-1}$ and $\mathcal{A}_k$ are essentially normal $A$-modules. It follows from \cite[Theorem 1]{do:module} that $\mathcal{A}_{k-1}^-$ is an essentially normal $A$-module. 

Repeating the above arguments to the exact sequence 
\[
0\longrightarrow \mathcal{A}_{k-2}^-\longrightarrow \mathcal{A}_{k-1}\longrightarrow \mathcal{A}_{k-1}^{-}\longrightarrow 0,
\]
we conclude that $\mathcal{A}_{k-2}^-$ is an essentially normal $A$-module. Similarly, the iterated arguments show that every $\mathcal{A}_i^{-}$ is an essentially normal $A$-module. 
\end{proof}

Modeled by the above short exact sequence of essentially normal Hilbert $A$-modules $\mathcal{A}_i^-$ and $\mathcal{A}_i$, we prove the following property.
 
\begin{proposition}\label{prop:isometry} Let $M_1$, $M_2$, $M_3$ be essentially normal Hilbert $A$-modules, and $W_1:M_1\to M_2$, $W_2:M_2\to M_3$ be morphisms of Hilbert $A$-modules satisfying the following short exact sequence
\[
0\longrightarrow M_1\stackrel{W_1}{\longrightarrow} M_2\stackrel{W_2}{\longrightarrow} M_3\longrightarrow 0.
\]
Let $\overline{\ball}^m$ be the closure of the unit ball in $\complex^m$, and $\alpha_i: C(\overline{\ball}^m)\to \mathcal{C}(M_i):=B(M_i)/\mathcal{K}(M_i)$, $i=1, 2,3$ be the induced representation of $C(\overline{\ball}^m)$ on the Calkin algebra $\mathcal{C}(M_i)$.  There are partial co-isometry operators $U: M_2\to M_1$ and $V: M_2\to M_3$ such that 
\[
UU^*=I=VV^*,\ UV^*=0=VU^*,\ U^*U+V^*V=I,  
\] 
and commute with $A$-module structures up to compact operators, i.e. 
\[
U\alpha_2 U^*=\alpha_1,\qquad V\alpha_2V^*=\alpha_3.  
\]
\end{proposition}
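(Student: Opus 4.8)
Write $T_i^{(j)}$ for the operator on $M_j$ implementing the $z_i$-action, so $\alpha_j(z_i)=\pi_j(T_i^{(j)})$, and set $N:=\operatorname{ran}(W_1)=\ker(W_2)$, a closed $A$-submodule of $M_2$, with orthogonal projection $P:=P_N$. Then $W_1$ is a bounded invertible $A$-module map onto $N$, and $W_2$ restricts to a bounded invertible $A$-module map $N^{\perp}\to M_3$; I would take polar decompositions $W_1=U_1|W_1|$, $W_2=V_1|W_2|$, so that $U_1\colon M_1\to M_2$ is an isometry with range $N$ and $V_1\colon M_2\to M_3$ is a partial isometry with initial space $N^{\perp}$ and final space $M_3$ (here surjectivity of $W_2$ is used). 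The plan is to put $U:=U_1^{*}\colon M_2\to M_1$ and $V:=V_1\colon M_2\to M_3$. From $U_1^{*}U_1=I_{M_1}$, $U_1U_1^{*}=P$, $V_1V_1^{*}=I_{M_3}$, $V_1^{*}V_1=I-P$, $\ker U_1^{*}=N^{\perp}=\operatorname{ran}V_1^{*}$ and $\operatorname{ran}U_1=N=\ker V_1$, all five algebraic identities
\[
UU^{*}=I,\qquad VV^{*}=I,\qquad UV^{*}=0,\qquad VU^{*}=0,\qquad U^{*}U+V^{*}V=P+(I-P)=I
\]
hold formally, using no essential normality.

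The substance is the two relations $U\alpha_2U^{*}=\alpha_1$ and $V\alpha_2V^{*}=\alpha_3$. Since $\pi(U),\pi(V)$ are coisometries and $P$ will commute with $\alpha_2$ modulo $\mathcal K$ (see below), $f\mapsto \pi(U)\alpha_2(f)\pi(U)^{*}$ will be a unital $*$-homomorphism of $C(\overline{\ball}^m)$, so it suffices to check these relations on the generators $z_i$. Using the exact intertwining $T_i^{(2)}W_1=W_1T_i^{(1)}$ one gets $W_1^{*}T_i^{(2)}W_1=W_1^{*}W_1\,T_i^{(1)}$, whence
\[
U T_i^{(2)}U^{*}-T_i^{(1)}=|W_1|\,T_i^{(1)}\,|W_1|^{-1}-T_i^{(1)}=[\,|W_1|,T_i^{(1)}\,]\,|W_1|^{-1};
\]
similarly, using $T_i^{(3)}W_2=W_2T_i^{(2)}$ and $W_2=V_1|W_2|$ one finds $V T_i^{(2)}-T_i^{(3)}V=V_1\big((I-P)T_i^{(2)}-|W_2|\,T_i^{(2)}\,|W_2|^{+}\big)$, with $|W_2|^{+}$ the (bounded) Moore--Penrose inverse of $|W_2|$, and if $[\,|W_2|,T_i^{(2)}\,]$ is compact this equals $-V_1[\,P,T_i^{(2)}\,]$ modulo $\mathcal K$. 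So the whole proposition reduces to the three compactness statements $[\,P,T_i^{(2)}\,]\in\mathcal K$, $[\,W_1^{*}W_1,T_i^{(1)}\,]\in\mathcal K$, $[\,W_2^{*}W_2,T_i^{(2)}\,]\in\mathcal K$ (the last two then give compactness of $[\,|W_1|,T_i^{(1)}\,]$, $[\,|W_2|,T_i^{(2)}\,]$ by functional calculus, $|W_1|$ being positive invertible).

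For the first: $N$ being an $A$-submodule, $(I-P)T_i^{(2)}P=0$, so $[\,P,T_i^{(2)}\,]=PT_i^{(2)}(I-P)=:X_i$; inserting $I=P+(I-P)$ into the two terms of the compression $P[\,T_i^{(2)},{T_i^{(2)}}^{*}\,]P$ and using $(I-P)T_i^{(2)}P=0$ yields $X_iX_i^{*}=P[\,T_i^{(2)},{T_i^{(2)}}^{*}\,]P-[\,A_i,A_i^{*}\,]$ with $A_i:=PT_i^{(2)}P$; the first term is compact because $M_2$ is essentially normal, the second because $A_i|_{N}$ is the $z_i$-action on the submodule $N$, which is essentially normal: $N$ fits in $0\to N\to M_2\to M_2/N\to 0$ with $M_2$ and $M_2/N\cong M_3$ essentially normal, so \cite[Theorem 1]{do:module} applies. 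Hence $X_i=[\,P,T_i^{(2)}\,]$ is compact. The remaining two statements say that the adjoint module maps $W_1^{*},W_2^{*}$ are $A$-module morphisms modulo $\mathcal K$; granting $[\,P,T_i^{(2)}\,]\in\mathcal K$, the relation $[\,W_2^{*}W_2,T_i^{(2)}\,]\in\mathcal K$ is equivalent to $[\,W_2W_2^{*},T_i^{(3)}\,]\in\mathcal K$ on $M_3$, and again follows from essential normality of all three modules together with \cite[Theorem 1]{do:module}. (A cleaner alternative route to the endgame, once $[\,P,T_i^{(2)}\,]\in\mathcal K$ is known: modulo $\mathcal K$ the $T_i^{(2)}$ are block diagonal for $M_2=N\oplus N^{\perp}$, so $\alpha_2\simeq\alpha_N\oplus\alpha_{N^{\perp}}$; since $N\cong M_1$ and $N^{\perp}\cong M_2/N\cong M_3$ are isomorphic essentially normal modules, Voiculescu's theorem provides unitaries $u\colon M_1\to N$, $w\colon M_3\to N^{\perp}$ with $u\alpha_1u^{*}\equiv\alpha_N$, $w\alpha_3w^{*}\equiv\alpha_{N^{\perp}}$ modulo $\mathcal K$, and then $U:=u^{*}P_N$, $V:=w^{*}P_{N^{\perp}}$ work.) Combining these, $U\alpha_2U^{*}=\alpha_1$ and $V\alpha_2V^{*}=\alpha_3$ in the Calkin algebras, which is the claim. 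The only genuinely nontrivial input — and the expected main obstacle — is the step above: the essential normality of the submodule $N$ and the compatibility modulo compacts of the adjoint module maps. This is where all three essential-normality hypotheses and the short exact sequence theorem of \cite{do:module} are really needed, since essential normality is not visibly preserved under non-unitary module isomorphism; everything else is formal.
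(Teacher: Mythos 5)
Your formal setup is correct: the algebraic identities for $U=U_1^{*}$, $V=V_1$ are immediate from the polar decompositions, and the reduction of the two Calkin-algebra identities to compactness of $[P,T_i^{(2)}]$, $[\,|W_1|,T_i^{(1)}\,]$, $[\,|W_2|,T_i^{(2)}\,]$ is sound. Your proof that $[P,T_i^{(2)}]\in\mathcal K$ — via $X_iX_i^{*}=P[T_i^{(2)},T_i^{(2)*}]P-[A_i,A_i^{*}]$, using essential normality of $M_2$ together with essential normality of the submodule $N=\ker W_2$, which you correctly get from \cite[Theorem 1]{do:module} applied to $0\to N\to M_2\to M_3\to 0$ — is a genuine and correct argument, and it is a slightly different angle than the paper's.

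The gap is in the remaining two compactness claims. You assert that $[W_1^{*}W_1,T_i^{(1)}]\in\mathcal K$ and $[W_2^{*}W_2,T_i^{(2)}]\in\mathcal K$ "follow from essential normality of all three modules together with \cite[Theorem 1]{do:module}," and you state an "equivalence" between $[W_2^{*}W_2,T_i^{(2)}]\in\mathcal K$ and $[W_2W_2^{*},T_i^{(3)}]\in\mathcal K$. Neither of these is proved, and I do not see how to extract them from \cite{do:module} Theorem 1, which concerns essential normality of kernels/quotients, not compatibility of the positive parts $W_j^{*}W_j$ with the module action. What you are missing — and what the paper's proof is built on — is the Fuglede--Putnam theorem applied in the Calkin algebra. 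From $T_i^{(2)}W_1=W_1T_i^{(1)}$, essential normality of $M_1$ and $M_2$ makes $\pi(T_i^{(1)})$ and $\pi(T_i^{(2)})$ normal in the respective Calkin algebras, so Fuglede--Putnam gives $(T_i^{(2)})^{*}W_1\equiv W_1(T_i^{(1)})^{*}$ mod $\mathcal K$; taking adjoints, $W_1^{*}T_i^{(2)}\equiv T_i^{(1)}W_1^{*}$, and composing with $W_1$ on the right gives $[W_1^{*}W_1,T_i^{(1)}]\in\mathcal K$ as desired (and similarly for $W_2$). Without Fuglede--Putnam (or an equivalent input) the non-unitarity of the polar parts really is an obstacle, as you yourself observe — but you do not close it.

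The parenthetical alternative route has the same hole. Invoking Voiculescu to produce unitaries $u\colon M_1\to N$, $w\colon M_3\to N^{\perp}$ that intertwine the representations mod $\mathcal K$ requires first knowing that $\alpha_1$ and $\alpha_N$ (resp. $\alpha_3$ and $\alpha_{N^\perp}$) determine the same essential spectrum and the same Ext/K-homology class. A bounded but non-unitary module isomorphism does not give that for free; it is exactly the content of \cite[Proposition 4.4]{do-ta-yu:grr} (which the paper cites for a different step, and whose proof is again Fuglede--Putnam in spirit). You can take the polar part of $W_1$ itself as the unitary, but then you are back to needing $[\,|W_1|,T_i^{(1)}\,]\in\mathcal K$, i.e., Fuglede--Putnam. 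So: nice observation for $[P,T_i^{(2)}]$, but the proof is incomplete without the Fuglede--Putnam step.
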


\begin{proof} As $W_2$ is surjective, $W_2 W_2^*$ is positive definite. Let $W_2=A_3 V$ be the polar decomposition of the morphism $W_2$ such that $A_3$ is a positive definite operator on $M_3$, s.t. $A_3=(W_2W_2^*)^{\frac{1}{2}}$, and $V$ is a partial coisometry, i.e. $VV^*=I$.   

As $W_2$ is an $A$-module morphism, for any $f\in A=\complex[z_1, \cdots, z_m]$, we have
\[
A_3VT^2_f=W_2 T^2_f=T^3_f W_2=T^3_f A_3V,
\]
where $T^2_f$ and $T^3_f$ are the Toeplitz operators on $M_2$ and $M_3$ associated to $f$. As $M_2$ and $M_3$ are essentially normal, $T^2_f$ and $T^3_f$ are both normal in the respective Calkin algebras. It follows from the Fuglede-Putnam theorem that the following equation holds modulo compact operators
\[
A_3V(T^2_f)^*=W_2 (T^2_f)^*=(T^3_f)^* W_2=(T^3_f)^*A_3V. 
\]
Taking the adjoint of both sides of the above equation, we reach 
\[
T^2_f V^* A_3=V^* A_3 T^3_f. 
\]

Multiplying $A_3V$ to the left of each term in the above equation, with the property $VV^*=I$, we have 
\[
A_3 V T^2_f V^* A_3=A_3VV^*A_3 T^3_f=A_3^2 T^3_f. 
\]
As 
\[
A_3VT^2_f=T^3_f A_3V, 
\]
we conclude from the above equation that modulo compact operators
\[
A_3 V T^2_f V^* A_3=T^3_f A_3 VV^*A_3=T^3_f A_3^2=A_3^2 T^3_f. 
\]
As $A_3$ is positive definite, it is safe to conclude that modulo compact operators 
\[
T^3_f A_3=A_3 T^3_f. 
\]
The above commutativity plus the equation $A_3VT^2_f=T^3_f A_3V$ gives the following identity, modulo compact operators, 
\[
VT^2_f=T^3_f V. 
\]
The property of $VV^*=I$ confirms that modulo compact operators
\[
VT^2_f V^*=T^3_f, 
\]
which is exactly 
\[
V \alpha_2 V^*=\alpha_3. 
\]

As  $W_1: M_1\to M_2$ is injective, $W_1^* W_1$ is positive definite. Let $W_1=WA_1$ be the polar decomposition of the operator $W_1$, where $A_1=(W_1^*W_1)^{\frac{1}{2}}$ and $W: M_1\to M_2$ is a partial isometry, i.e. $W^*W=I$. A similar argument as above for $W_2$ show that modulo compact operators, for any $f\in A=\complex[z_1, \cdots, z_m]$, 
\[
A_1T_f^1=T^1_f A_1,
\]
and 
\[
W^* T^2_f  W=T^1_f.
\]
If we set $U=W^*$, then $UT^2_f U^*=T^1_f$, and $UU^*=I$, which shows 
\[
U\alpha_2U^*=\alpha_1. 
\] 

As $W_2W_1=A_3VU^* A_1=0 $, $VU^*=0$ follows from the invertibility of $A_1$ and $A_3$. Therefore, $U^*U$ and $V^*V$ are commuting two orthogonal projections on $M_2$. To prove that their sum is the identity operator, it is sufficient to prove that the kernel of their sum is trivial. If $\xi\in M_2$ satisfies $U^*U\xi+V^*V\xi=0$, then $U^*U\xi=V^*V\xi=0$, and  $U\xi=V\xi=0$.  Then $W_2\xi=A_3V\xi=0$, and $W_1^*\xi=A_1U\xi=0$. As $W_2\xi=0$, $\xi$ belongs to the kernel of $W_2$, and the exactness of the morphisms  shows that there is $\eta\in W_1$ such that $W_1\eta=\xi$. As $W_1^*\xi=0$, $W_1^*W_1\eta=0$, and $\xi=W_1\eta=0$. Hence the kernel of $U^*U+V^*V$ is trivial, and $U^*U+V^*V=I$. 
\end{proof}

Let $\mathfrak{T}(M_i)$ be the unital $C^*$-algebra generated by Toeplitz operators on $M_i$, and $\sigma^e_{i}$ be the associated essential spectrum space. The following property is a quick corollary of Proposition \ref{prop:isometry}. 

\begin{corollary}\label{cor:k-hom}Under the same assumption as Proposition \ref{prop:isometry}, the following equation holds in $K_1(\sigma^e_2)$, 
\[
[\alpha_2]=[\alpha_1]+[\alpha_3],
\]
where $[\alpha_1]$ and $[\alpha_3]$ are identified as classes in $K_1(\sigma^e_2)$ by the partial coisometry operators $U$ and $V$ introduced in Proposition \ref{prop:isometry}. 
\end{corollary}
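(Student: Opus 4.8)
Here is the strategy I would follow to prove Corollary~\ref{cor:k-hom}.

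The plan is to glue the two partial coisometries produced by Proposition~\ref{prop:isometry} into one genuine unitary $S\colon M_2\to M_1\oplus M_3$ that intertwines $\alpha_2$ with $\alpha_1\oplus\alpha_3$ modulo compacts, and then read off the $K$-homology identity from this unitary equivalence of Toeplitz extensions. Concretely, I would set $S:=\binom{U}{V}\colon M_2\to M_1\oplus M_3$. The \emph{exact} relations $U^*U+V^*V=I_{M_2}$, $UU^*=VV^*=I$, and $UV^*=VU^*=0$ of Proposition~\ref{prop:isometry} give $S^*S=I_{M_2}$ and $SS^*=I_{M_1\oplus M_3}$, so $S$ is unitary. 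Conjugation turns $\alpha_2(g)$ into the $2\times 2$ operator matrix over $\mathcal{C}(M_1\oplus M_3)$ whose diagonal entries are $U\alpha_2(g)U^*=\alpha_1(g)$ and $V\alpha_2(g)V^*=\alpha_3(g)$ (mod compacts, by Proposition~\ref{prop:isometry}) and whose off-diagonal entries are $U\alpha_2(g)V^*$ and $V\alpha_2(g)U^*$. The whole point is to show that these two off-diagonal entries vanish in $\mathcal{C}(M_1\oplus M_3)$.

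To kill the off-diagonal terms I would go back to the polar decompositions in the proof of Proposition~\ref{prop:isometry}: writing $W_1=U^*A_1$ for the original injective morphism (so $U=W^*$), the operator $P:=U^*U=WW^*$ is the orthogonal projection of $M_2$ onto the closed submodule $N:=\operatorname{Im}(W_1)=\ker(W_2)$ (in the situation used in this paper, $N=\mathcal{A}_i^{-}\subset\mathcal{A}_i$). Since $N\cong M_1$, $M_2$, and $M_2/N\cong M_3$ are all essentially normal, the argument used to prove Lemma~\ref{lem:sub-module}, i.e.\ \cite[Theorem~1]{do:module}, shows that $[P,T^2_{z_j}]$ is compact for every $j$, hence $P$ commutes with the image of $\alpha_2$ in $\mathcal{C}(M_2)$. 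Combining this with $PV^*=U^*UV^*=0$ and $UU^*U=U$ gives, modulo compacts,
\[
U\,\alpha_2(g)\,V^*=U\,\alpha_2(g)\,(I-P)\,V^*=U\,(I-P)\,\alpha_2(g)\,V^*=(U-UU^*U)\,\alpha_2(g)\,V^*=0,
\]
and symmetrically $V\alpha_2(g)U^*=0$. Thus $S\,\alpha_2\,S^*=\alpha_1\oplus\alpha_3$ as representations of $C(\overline{\ball}^m)$ in $\mathcal{C}(M_1\oplus M_3)$ modulo compacts.

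Finally I would translate this into $K$-theory. Since $S$ is a unitary, $\alpha_2$ and $\alpha_1\oplus\alpha_3$ are unitarily equivalent Busby invariants, hence determine the same $K$-homology class. The essential spectrum of $\alpha_1\oplus\alpha_3$ is $\sigma^e_1\cup\sigma^e_3$, which therefore equals $\sigma^e_2$, and the conjugations $T\mapsto U^*TU$ and $T\mapsto V^*TV$ realize, on the level of spectra, the inclusions $\sigma^e_1,\sigma^e_3\hookrightarrow\sigma^e_2$ — these are precisely the identifications referred to in the statement. Additivity of $\operatorname{Ext}$ under direct sums then yields
\[
[\alpha_2]=[\alpha_1\oplus\alpha_3]=[\alpha_1]+[\alpha_3]\qquad\text{in }K_1(\sigma^e_2),
\]
which is the assertion. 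I expect the only substantial step to be the compactness of the off-diagonal blocks $U\alpha_2(g)V^*$; it reduces, via $P=U^*U$, to the submodule-projection commutator estimate of \cite{do:module} already invoked in Lemma~\ref{lem:sub-module}, and everything else is bookkeeping with the identities from Proposition~\ref{prop:isometry}.
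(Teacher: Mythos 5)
Your strategy---assemble the two partial coisometries into a unitary $S=\binom{U}{V}\colon M_2\to M_1\oplus M_3$ and read the $K$-homology identity off the resulting block decomposition of $\alpha_2$---is exactly what the paper's one-line proof leaves implicit, and your execution is correct, including the identification of the crucial step as the vanishing of the off-diagonal blocks $U\alpha_2(g)V^*$ and $V\alpha_2(g)U^*$ in the Calkin algebra. Where you take a longer route than necessary is in killing those blocks: you reduce to compactness of $[P,T^2_{z_j}]$ with $P=U^*U$ and invoke \cite[Theorem~1]{do:module}, but to apply that theorem one needs essential normality of the submodule $N=\ker(W_2)$ and of the quotient $M_2/N$ \emph{with their inherited Hilbert structures}, which is not literally the hypothesis that $M_1$ and $M_3$ are essentially normal; it follows only via the intertwinings $W^*T^2_fW=T^1_f$ and $VT^2_fV^*=T^3_f$ (mod compacts) already established inside the proof of Proposition~\ref{prop:isometry}. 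Given that, it is quicker to use what that proof actually shows---the stronger relations $VT^2_f=T^3_fV$ and, symmetrically, $UT^2_f=T^1_fU$ modulo compacts---from which $U\alpha_2(g)V^*=\alpha_1(g)\,UV^*=0$ immediately, with no detour through $P$. Alternatively, the vanishing follows purely from the listed algebraic relations together with multiplicativity of $\alpha_1=U\alpha_2U^*$: multiplicativity gives $U\alpha_2(g)V^*V\alpha_2(h)U^*=0$ for all $g,h$, and putting $h=\bar g$ yields $\bigl(U\alpha_2(g)V^*\bigr)\bigl(U\alpha_2(g)V^*\bigr)^*=0$ in $\mathcal{C}(M_1)$, hence $U\alpha_2(g)V^*=0$. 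Either of these is more economical than your argument via Douglas's theorem, but your argument is sound and reaches the same direct-sum decomposition $S\alpha_2S^*=\alpha_1\oplus\alpha_3$, after which the additivity of $\operatorname{Ext}$ gives the stated equality.
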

\begin{proof}
This follows the property for $K$-homology associated to the three representations $\alpha_2, \alpha_1=U\alpha_2U^*, \alpha_3=V\alpha_2V^*: C(\sigma^e_2)\to \mathcal{C}(M_2)$ with $UU^*=I=VV^*$, $UV^*=0=VU^*$, and $U^*U+V^*V=I$. 
\end{proof}

We are now ready to apply Proposition \ref{prop:isometry} and Corollary \ref{cor:k-hom} to the exact sequence constructed in Theorem \ref{thm:main-intro}. Let $\mathfrak{T}(\mathcal{A}_i)$ (and $\mathfrak{T}(\mathcal{A}_i^-)$) be the unital $C^*$-algebra generated by Toeplitz operators on $\mathcal{A}_i$ (and $\mathcal{A}_i^-$), and $\sigma^e_{i}$ (and $\sigma^e_{i-}$) be the associated essential spectrum space, and $\alpha_i$ (and $\alpha_i^-$) be the associated representation of $C(\sigma^e_i)$ (and $C(\sigma^e_{i-})$)  into the Calkin algebra $\mathcal{C}(\mathcal{A}_i)$ (and $\mathcal{C}(\mathcal{A}_i^-)$). 
\begin{theorem}\label{thm:k-hom} (Theorem \ref{thm:k-hom-intro}) The $K$-homology class associated to $[\mathfrak{T}(Q_I)]$ is the same as $[\alpha_1^-]$, and in $K_1(\sigma^e_1\cup \cdots \cup \sigma^e_k)$, 
\[
[\mathfrak{T}(Q_I)]=[\alpha_1]-[\alpha_2]+\cdots+(-1)^{k-1}[\alpha_{k}].
\]
\end{theorem}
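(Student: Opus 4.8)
The plan is to split the long exact resolution of Theorem~\ref{thm:main-intro} into short exact sequences of essentially normal Hilbert $A$-modules
\[
0\longrightarrow \overline{I}\longrightarrow \mathcal{A}_0\stackrel{\Psi_0}{\longrightarrow}\mathcal{A}_1^-\longrightarrow 0,\qquad 0\longrightarrow \mathcal{A}_i^-\longrightarrow \mathcal{A}_i\stackrel{\Psi_i}{\longrightarrow}\mathcal{A}_{i+1}^-\longrightarrow 0\ \ (1\le i\le k-1),
\]
apply Corollary~\ref{cor:k-hom} to each, and telescope. Exactness of the resolution at $\mathcal{A}_1$ gives $\operatorname{Im}(\Psi_0)=\ker(\Psi_1)=\mathcal{A}_1^-$, and at $\mathcal{A}_{i+1}$ it gives $\operatorname{Im}(\Psi_i)=\ker(\Psi_{i+1})=\mathcal{A}_{i+1}^-$, so these are genuinely short exact; essential normality of every term is Lemma~\ref{lem:sub-module} together with the essential normality of $L^2_a(\ball^m)$ and of $\overline{I}$ (the latter from \cite{do:index}, or from Theorem~\ref{thm:main-intro} and \cite{do:module}).

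First I would identify $[\mathfrak{T}(Q_I)]$ with $[\alpha_1^-]$ using the first of these sequences. Applying Proposition~\ref{prop:isometry} to it, the partial coisometry $V\colon \mathcal{A}_0\to\mathcal{A}_1^-$ it produces has initial space $\ker(\Psi_0)^\perp=\overline{I}^{\perp}$, so $V$ restricts to a Hilbert-space unitary $\overline{I}^{\perp}\to\mathcal{A}_1^-$. Identifying $Q_I=L^2_a(\ball^m)/\overline{I}$ with $\overline{I}^{\perp}$, so that $T_{z_i}|_{Q_I}$ becomes $P_{\overline{I}^{\perp}}T_{z_i}P_{\overline{I}^{\perp}}$, the identities $VP_{\overline{I}^{\perp}}=V$ and $V\,\alpha_{\mathcal{A}_0}\,V^{*}=\alpha_1^-$ from Proposition~\ref{prop:isometry} show that conjugation by this unitary carries $\{T_{z_i}|_{Q_I}\}$ to generators of $\mathfrak{T}(\mathcal{A}_1^-)$ modulo compacts, hence identifies the two Toeplitz extensions. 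Therefore $\sigma^e_I=\sigma^e_{1-}$ and $[\mathfrak{T}(Q_I)]=[\alpha_1^-]$.

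Next, Corollary~\ref{cor:k-hom} applied to the $i$-th sequence gives $[\alpha_i]=[\alpha_i^-]+[\alpha_{i+1}^-]$, an identity which a priori lives in $K_1(\sigma^e_i)$ and in which the right-hand classes are transported into $\mathcal{C}(\mathcal{A}_i)$ by the operators $U,V$ of Proposition~\ref{prop:isometry}. Because those partial isometries intertwine the Toeplitz representations modulo compacts, we have $\sigma^e_{i-}\subseteq\sigma^e_i$, and the image in $K_1(\sigma^e_1\cup\cdots\cup\sigma^e_k)$ of each class $[\alpha_j^-]$ is the same whether it is read off from the sequence in which $\mathcal{A}_j^-$ is the sub-object or the one in which it is the quotient: in both cases it is the push-forward of the intrinsic class $[\mathfrak{T}(\mathcal{A}_j^-)]\in K_1(\sigma^e_{j-})$ along the inclusion $\sigma^e_{j-}\hookrightarrow\sigma^e_1\cup\cdots\cup\sigma^e_k$. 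Since $\Psi_{k-1}$ is onto we have $\mathcal{A}_k^-=\mathcal{A}_k$, hence $[\alpha_k]=[\alpha_k^-]$, and then all the relations hold simultaneously in $K_1(\sigma^e_1\cup\cdots\cup\sigma^e_k)$ and telescope:
\[
\sum_{i=1}^{k}(-1)^{i-1}[\alpha_i]=\sum_{i=1}^{k-1}(-1)^{i-1}\bigl([\alpha_i^-]+[\alpha_{i+1}^-]\bigr)+(-1)^{k-1}[\alpha_k^-]=[\alpha_1^-]=[\mathfrak{T}(Q_I)],
\]
because in the expanded sum the coefficient of $[\alpha_j^-]$ is $(-1)^{j-1}+(-1)^{j-2}=0$ for $2\le j\le k-1$, the two $[\alpha_k^-]$-contributions cancel, and only $[\alpha_1^-]$ survives.

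The step I expect to demand the most care is this last bookkeeping. Corollary~\ref{cor:k-hom} delivers each relation in a different group $K_1(\sigma^e_i)$, so one must check that the push-forwards along the inclusions into $K_1(\sigma^e_1\cup\cdots\cup\sigma^e_k)$ are consistent with the intertwining operators $U,V$ used on the shared modules $\mathcal{A}_j^-$, i.e.\ that transporting a representation by an intertwining partial isometry does not change its class once one lands in the ambient $K_1$-group. Concretely, one verifies that the class of the representation $f\mapsto U\alpha_{\mathcal{A}_i}(f)U^{*}$ of $C(\sigma^e_i)$ is exactly the image of $[\mathfrak{T}(\mathcal{A}_j^-)]\in K_1(\sigma^e_{j-})$ under $\sigma^e_{j-}\hookrightarrow\sigma^e_i$, which is intrinsic to $\mathcal{A}_j^-$. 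Everything else — the short exact sequences, essential normality, and the combinatorial identity — is routine once Theorem~\ref{thm:main-intro}, Lemma~\ref{lem:sub-module}, Proposition~\ref{prop:isometry} and Corollary~\ref{cor:k-hom} are in hand.
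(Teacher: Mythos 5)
Your proof is correct and follows the same overall strategy as the paper: split the resolution into the short exact sequences $0\to\mathcal{A}_i^-\to\mathcal{A}_i\to\mathcal{A}_{i+1}^-\to 0$, apply Corollary~\ref{cor:k-hom} to get $[\alpha_i]=[\alpha_i^-]+[\alpha_{i+1}^-]$, push all of these forward to $K_1(\sigma_1^e\cup\cdots\cup\sigma_k^e)$, and telescope. The one place you diverge in a worthwhile way is the identification $[\mathfrak{T}(Q_I)]=[\alpha_1^-]$. The paper observes that the short exact sequence $0\to\overline{I}\to L^2_a(\ball^m)\to\mathcal{A}_1^-\to 0$ gives an $A$-module isomorphism $Q_I\cong\mathcal{A}_1^-$ and then imports \cite[Proposition 4.4]{do-ta-yu:grr} to conclude the $K$-homology classes agree. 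You instead extract this directly from the already-proved Proposition~\ref{prop:isometry}: the partial coisometry $V$ coming from the polar decomposition of $\Psi_0$ has initial space $\overline{I}^\perp$, so it restricts to a unitary $\overline{I}^\perp\cong Q_I\to\mathcal{A}_1^-$, and the relations $VP_{\overline{I}}=0$, $V\alpha_{\mathcal A_0}V^*=\alpha_1^-$ then give $u\bigl(T_{z_i}|_{Q_I}\bigr)u^*=T_{z_i}|_{\mathcal{A}_1^-}$ modulo compacts, identifying the two Toeplitz extensions outright. This is a slightly more self-contained route (no appeal to \cite{do-ta-yu:grr}) and makes the unitary equivalence concrete. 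Your explicit remark about needing the push-forwards along $\sigma^e_{j-}\hookrightarrow\sigma^e_1\cup\cdots\cup\sigma^e_k$ to be independent of which side of the two sequences a given $\mathcal{A}_j^-$ sits on is a genuine subtlety that the paper's step-by-step inductive presentation handles implicitly; your justification — that transporting an essentially normal representation by a partial isometry that intertwines it modulo compacts does not change its $K$-homology class in the ambient group — is the right reason and worth stating.
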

\begin{proof}
We apply Corollary \ref{cor:k-hom} to the following short exact sequence of essentially normal Hilbert $A$-modules
\[
0\longrightarrow \mathcal{A}_{i}^-\longrightarrow \mathcal{A}_i\longrightarrow \mathcal{A}^-_{i+1}\longrightarrow 0,\ i=1, \cdots, k-1. 
\]
We have the following equation in $K_1(\sigma_i)$,
\[
[\alpha_i]=[\alpha_i^-]+[\alpha_{i+1}^-]. 
\]

When $i=k-1$, $\mathcal{A}_{k}^-=\mathcal{A}_{k}$, and in $K_1(\sigma^e_{k-1})$,
\[
[\alpha_{k-1}]=[\alpha_{k-1}^-]+[\alpha_k]. 
\]

Similarly, for $i=k-2$, in $K_1(\sigma^e_{k-2})$, 
\[
[\alpha_{k-2}]=[\alpha_{k-2}^-]+[\alpha_{k-1}^-].
\]

Combining the above two equations on $K$-homology groups, we conclude that in $K_1(\sigma^e_{k-1}\cup \sigma^e_{k-2})$, 
\[
[\alpha_{k-1}]+[\alpha_{k-2}^-]=[\alpha_k]+[\alpha_{k-2}],
\]
by pushing forward the respective equations in $K_1(\sigma^e_{k-1})$ and $K_1(\sigma^e_{k-2})$ into the ones in $K_1(\sigma^e_{k-1}\cup \sigma^e_{k-2})$ via the natural inclusion maps $\sigma^e_{k-1}, \sigma^e_{k-2}\hookrightarrow \sigma^e_{k-1}\cup \sigma^e_{k-2} $. 

Repeating the above arguments inductively, we conclude that in $K_1(\sigma_1^e\cup \cdots \cup \sigma_k^e)$, 
\[
[\alpha_{1}^-]=[\alpha_1]-[\alpha_2]+\cdots+(-1)^{k-1}[\alpha_{k}]. 
\] 

By the short exact sequence of essentially normal Hilbert $A$-modules,
\[
0\longrightarrow \overline{I}\longrightarrow L^2_a(\ball^m)\longrightarrow \mathcal{A}_1^-\longrightarrow 0,
\]
we conclude that there is a natural $A$-module isomorphism between the quotient Hilbert modules $Q_I=L^2_a(\ball^m)/\overline{I}$ and $\mathcal{A}_1^-$. 
We conclude from \cite[Proposition 4.4]{do-ta-yu:grr} that they must define the same $K$-homology class. Therefore, we conclude that in $K_1(\sigma_1^e\cup \cdots \cup \sigma_k^e)$, 
\[
[\mathfrak{T}(Q_I)]=[\alpha_1]-[\alpha_2]+\cdots+(-1)^{k-1}[\alpha_{k}]. 
\] 

\end{proof}
\section{Examples}\label{sec:example}
In this section, we explain our construction of the boxes in examples. 
\subsection{Ideal $I=\langle z_1^2 z_2^2\rangle\subset \complex[z_1, z_2]$}
The exponents of monomials in the ideal $I=\langle z_1^2 z_2^2\rangle$ comprises the region $\{(n^1, n^2)| n^1, n^2\geq 2\}$. In this example, there is only one $\alpha=(2,2)$. There are two boxes associated to the ideal, $\mathcal{B}_{\mathfrak{j}_1}^{\mathfrak{b}_1}:=\{(n^1, n^2)| n^1\leq 1\}$, $\mathcal{B}_{\mathfrak{j}_2}^{\mathfrak{b}_2}:=\{(n^1, n^2)|n^2\leq 1\}$. The intersection $\mathcal{B}_{\mathfrak{j}_{12}}^{\mathfrak{b}_{12}} $ of $\mathcal{B}_{\mathfrak{j}_1}^{\mathfrak{b}_1}$ and $\mathcal{B}_{\mathfrak{j}_2}^{\mathfrak{b}_2}$ is of the form $\{(n^1, n^2)| n^1, n^2\leq 1\}$. 

We have following subsets $\ball_{\mathfrak{j}_1}:=\{(0, z_2)|\ |z_2|<1\}$, $\ball_{\mathfrak{j}_2}:=\{(z_1, 0)|\ |z_1|<1\}$. 

The Hilbert module $\mathcal{A}_1$ is the direct sum of two submodules $\mathcal{A}^1_1$ and $\mathcal{A}^2_1$, where $\mathcal{A}^1_1$ is the closed subspace of $L^2_a(\ball^2)$ spanned by $\{ z_2^n, z_1z_2^n\}_{n\in \mathbb{N}}$, and $\mathcal{A}^2_1$ is the closed subspace of $L^2_a(\ball^2)$ spanned by $\{z_1^n, z_1^n z_2\}_{n\in \mathbb{N}}$. 

The Hilbert $A$-module on $\mathcal{A}_2$ is the subspace of $L^2_a(\ball^2)$ spanned by $\{1, z_1, z_2, z_1z_2\}$.  It is easy to see that $\mathcal{A}_2=\mathcal{A}^1_1\cap \mathcal{A}^2_1$. 
\subsection{Ideal $I=\left\langle z_{1}^{p}z_{2}^{q},z_{1}^{r}z_{2}^{s}\right\rangle\subset \complex[z_1,z_2]$}
We consider the ideal $I=\left\langle z_{1}^{p}z_{2}^{q},z_{1}^{r}z_{2}^{s}\right\rangle\subset \complex[z_1,z_2]$, $p,q,r,s\geq 0$. We explain our construction of boxes and the associated Hilbert modules in Theorem \ref{thm:main-intro} in this example. 

The exponents of monomials in $I$ belong to the white region of the Figure (\ref{fig:33}) marked by $I$. The subset $C(I)\subset \mathbb{N}^2$ consisting of exponents of monomials not in the ideal $I$ is the blue region in Figure (\ref{fig:33}). In this example, $\alpha_1=(r,s), \alpha_2=(p,q)$,  $S(\alpha_1, \alpha_2)$ consists of 4 arrays $(1,1), (1,2), (2,1), (2,2) $.  The boxes associated to these arrays are described below. 
\begin{enumerate}
\item For $\mathfrak{s}=(1,1)$, the box $\mathcal{B}_{\mathfrak{j}_{11}}^{\mathfrak{b}_{11}}$ is $\{(n^1, n^2)| n^1<r\}$;
\item For $\mathfrak{s}=(1,2)$, the box $\mathcal{B}_{\mathfrak{j}_{12}}^{\mathfrak{b}_{12}}=\{(n^1, n^2)|n^1<p, n^2<s\}$; 
\item For $\mathfrak{s}=(2,1)$, the box $\mathcal{B}_{\mathfrak{j}_{21}}^{\mathfrak{b}_{21}}=\{(n^1, n^2)| n^1<r, n^2<q\}$;
\item For $\mathfrak{s}=(2,2)$, the box $\mathcal{B}_{\mathfrak{j}_{22}}^{\mathfrak{b}_{22}}=\{(n^1,n^2)| n^2<q\}$.
\end{enumerate}
\begin{figure}
\centerline{
\includegraphics[scale=0.4]{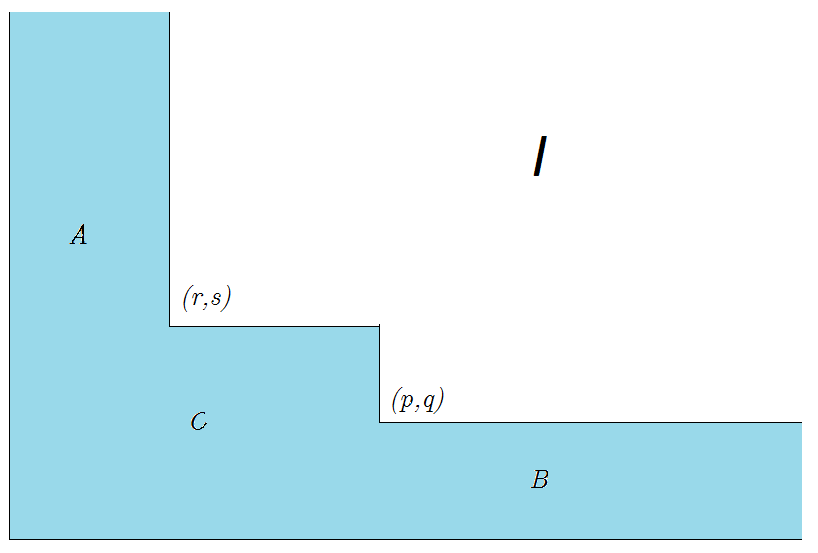}}
\caption
{
\label{fig:33}
{Staircase diagram corresponding to $I=\langle z_{1}^{p}z_{2}^{q},z_{1}^{r}z_{2}^{s}\rangle$.}
}
\end{figure}
In the Figure (\ref{fig:33}), the box $\mathcal{B}_{\mathfrak{j}_{11}}^{\mathfrak{b}_{11}}$ is marked as region $A$, and  the box $\mathcal{B}_{\mathfrak{j}_{12}}^{\mathfrak{b}_{12}}$ is marked as region $C$, and the box $\mathcal{B}_{\mathfrak{j}_{22}}^{\mathfrak{b}_{22}}$ is marked as region $B$. As the box $\mathcal{B}_{\mathfrak{j}_{21}}^{\mathfrak{b}_{21}}$ is contained inside $\mathcal{B}_{\mathfrak{j}_{12}}^{\mathfrak{b}_{12}}$, we do not need to include the box $\mathcal{B}_{\mathfrak{j}_{21}}^{\mathfrak{b}_{21}}$ in our construction. However, our theorem still works even if we include it. 

The Hilbert space $\mathcal{A}_1$ is a direct sum of three spaces $\mathcal{A}^{11}_1, \mathcal{A}^{12}_1, \mathcal{A}^{22}_1$. The Hilbert space $\mathcal{A}^{11}_1$ is the subspace of $L^2_a(\ball^2)$ spanned by $\{z_2^n, z_1z_2^n, \cdots, z_1^{r-1}z_2^n\}_{n\in \mathbb{N}}$. And the Hilbert space $\mathcal{A}^{12}$ is the finite dimension subspace of $L^2_a(\ball^2)$ spanned by 
\[
\begin{array}{llll}
1&, z_1&, \cdots &, z_1^{p-1},\\
 z_2&, z_1z_2&, \cdots &,z_1^{p-1}z_2, \\
 &\cdots&\cdots&\\
 z_2^{s-1}&,z_1z_2^{s-1}&,\cdots &, z_1^{p-1}z_2^{s-1}.
\end{array}
\]
And the Hilbert space $\mathcal{A}^{22}$ is the subspace of $L^2_a(\ball^2)$ spanned by $\{z_1^n, z_1^n z_2, \cdots, z_1^n z_2^{q-1}\}_{n\in \mathbb{N}}$. 

The Hilbert space $\mathcal{A}_2$ is a direct sum of three spaces,  $\mathcal{A}^{11}_1\cap \mathcal{A}^{12}_1$, $\mathcal{A}^{11}_1\cap \mathcal{A}^{22}_1$,  and $\mathcal{A}^{12}_1\cap \mathcal{A}^{22}_1$. 

The Hilbert space $\mathcal{A}_3$ is the subspace of $L^2_a(\ball^2)$ spanned by 
\[
\begin{array}{lll}
1&,\cdots&, z_1^{r-1},\\
z_2&, \cdots&, z_1^{r-1}z_2,\\
&\cdots& \\
z_2^{q-1}&,\cdots&, z_1^{r-1}z_2^{q-1}. 
\end{array}
\]
\subsection{Ideal $\langle z_1^2, z_3-z_2^2\rangle\subset \complex[z_1, z_2, z_3]$} 
\label{subsec:none-monomial}
We consider the biholomorphic transformation $T:\complex^3\to \complex^3$ by $(z_1, z_2, z_3)\mapsto (u_1, u_2, u_3)=(z_1, z_2, z_3-z_2^2)$. Under the biholomorphic transformation $T$, the ideal $I=\langle z_1^2, z_3-z_2^2\rangle$ is changed to $I'=\langle z_1^2, z_3\rangle$.  $T$ maps the unit ball $\ball^3$ to the domain $\Omega$ defined as follows,
\[
\Omega=\{(u_1, u_2, u_3)| |u_1|^2+|u_2|^2+|u_3+u_2^2|^2<1\}\subset \complex^3. 
\]

Let ${\rm d}V_\Omega$ be the normalized Lebesgue measure on $\Omega$, and $L^2_a(\Omega)$ be the Bergman space on $\Omega$ with respect to the measure ${\rm d}V_\Omega$. Under the biholomorphic transformation $T$, the Bergman space $L^2_a(\ball^3)$ is mapped isometrically to $L^2_a(\Omega)$. And the closure $\overline{I}$ of the ideal $I$ in $L^2_a(\ball^3)$ is mapped to the closure $\overline{I}'$ of the ideal $I'$ in $L^2_a(\Omega)$. We notice that the transformation $T$ maps the polynomial algebra to itself. As $A$-modules,  $L^2_a(\ball^3)$ (and $\overline{I}$) and $L^2_a(\Omega)$ (and $\overline{I}'$) are isomorphic.  

On $L^2_a(\Omega)$, for the ideal $I'=\langle z_1^2, z_3\rangle$, we apply the similar construction as in Section \ref{sec:resolution}. There is only one box $\mathcal{B}_{\mathfrak{j}}^{\mathfrak{b}}$ associated to the ideal $I'$, where $\mathfrak{j}=(1,3)$ and $\mathfrak{b}=(1,0)$, and $\mathcal{B}=\{(n_1, n_2, 0)| n_1\leq 1\}$. We consider the subset $\Omega_{\mathfrak{j}}=\{(z_1, z_2, z_3)\in \Omega| z_1=z_3=0\}$. $\Omega_{\mathfrak{j}}$ can be identified with an open subset of $\complex$ of the form $\{z_2|\ |z_2|^2+|z_2|^4<1\}$. Let $L^2_{a, s}(\Omega_{\mathfrak{j}})$ be the Hilbert space of square integrable holomorphic functions on $\Omega_{\mathfrak{j}}$ with respect to the norm 
\[
(1-|z_2|^2-|z_2|^4)^s {\rm d} V_{\Omega_{\mathfrak{j}}},
\]
where ${\rm d} V_{\Omega_{\mathfrak{j}}}$ is the normalized Lebesgue measure on $\Omega_{\mathfrak{j}}$. 

Define the Hilbert space $\mathcal{A}$ to be the following direct sum of two weighted Bergman spaces, i.e. 
\[
\mathcal{A}=L^2_{a, 2}(\Omega_{\mathfrak{j}})\oplus L^2_{a, 3}(\Omega_{\mathfrak{j}}).
\]

We define the $A=\complex[z_1, z_2, z_3]$-module structure on $\mathcal{A}_{I'}$ by the following formulas. For $(X,Y)\in L^2_{a, 2}(\Omega_{\mathfrak{j}})\oplus L^2_{a, 3}(\Omega_{\mathfrak{j}})$, 
\[
T_{z_1}(X, Y)=(0, X),\ T_{z_2}(X,Y)=(z_2X, z_2Y),\ T_{z_3}(X, Y)=(0,0). 
\]
We notice that monomials $\{z_2^i\}_{i\in \mathbb{N}}$ form an orthogonal basis of both $L^2_{a,2}(\Omega_{\mathfrak{j}})$ and $L^2_{a,3}(\Omega_{\mathfrak{j}})$. A straightforward computation shows that $\mathcal{A}_{I'}$ is an essentially normal Hilbert $\complex[z_1, z_2, z_3]$-module (See also \cite{do-gu-wa:monomial}). 

We define a map $\Psi_{I'}: L^2_a(\Omega)\to \mathcal{A}_{I'}=L^2_{a, 2}(\Omega_{\mathfrak{j}})\oplus L^2_{a, 3}(\Omega_{\mathfrak{j}})$ by 
\[
\Psi(f):=(f|_{z_1=z_3=0}, \frac{\partial f}{\partial z_1}|_{z_1=z_3=0}). 
\]
Similar to the argument in the proof of Theorem \ref{thm:main-intro} in Section \ref{subsec:proof}, we can prove that $\Psi_{I'}$ is a bounded surjective $A$-module morphism and its kernel is exactly $\overline{I}'$.  Hence we have the following short exact sequence of essentially normal $A$-modules, 
\[
0\longrightarrow \overline{I}'\longrightarrow L^2_a(\Omega)\stackrel{\Psi_{I'}}{\longrightarrow} \mathcal{A}_{I'}\longrightarrow 0. 
\]
The biholomorphic transformation $T$ maps $W:=\{(0, z_2, z_2^2)\in \ball^3\}$ isomorphically to $\Omega_{\mathfrak{j}}=\{(0, z_2, 0)\in \Omega\}$. We can replace $\Omega_{\mathfrak{j}}$ by $W$ in the corresponding definitions of $\mathcal{A}_I$ the map $\Psi_I$. Using the transformation $T$, we have a short exact sequence of essentially normal $A$-modules, 
\[
0\longrightarrow \overline{I}\longrightarrow L^2_a(\ball^3)\stackrel{\Psi_I}{\longrightarrow} \mathcal{A}_I\longrightarrow 0. 
\]
As a corollary, we can conclude that the ideal $\overline{I}$ and associated quotient module $Q_I$ are both essentially normal. And we have  the following geometric description for the corresponding Teoplitz extension,
\[
[\mathfrak{T}(Q_I)]=[\mathfrak{T}\big(L^2_{a, 2}(\Omega_{\mathfrak{j}})\oplus L^2_{a, 3}(\Omega_{\mathfrak{j}})\big)]. 
\]

\end{document}